\let\OLDthebibliography\thebibliography
\renewcommand\thebibliography[1]{
  \OLDthebibliography{#1}
  \setlength{\parskip}{1pt}
  \setlength{\itemsep}{1pt plus 0.3ex}
}
\newtheorem{theorem}{Theorem}[section]
\newtheorem{cor}[theorem]{Corollary}
\newtheorem{lemma}[theorem]{Lemma}
\newtheorem{prop}[theorem]{Proposition}
\theoremstyle{definition}
\theoremstyle{remark}
\numberwithin{equation}{subsection}
\theoremstyle{plain}
\newtheorem{problem}[theorem]{Problem}
\def\square{\vbox{
      \hrule height 0.4pt
      \hbox{\vrule width 0.4pt height 5.5pt \kern 5.5pt \vrule width 0.4pt}
      \hrule height 0.4pt}}
\def\ch\mathrm{c h}
\long\def\symbolfootnote[#1]#2{\begingroup%
\def\thefootnote{\fnsymbol{footnote}}\footnote[#1]{#2}\endgroup}
\numberwithin{equation}{section}
\begin{document}
\title{On embeddings of quandles into groups}

\author{Valeriy Bardakov,~Timur Nasybullov}
\date{}
\maketitle
\begin{abstract}
In the present paper, we introduce the new construction of quandles. For a group $G$ and its subset $A$ we construct a quandle $Q(G,A)$ which is called the $(G,A)$-quandle and study properties of this quandle. In particular, we prove that if $Q$ is a quandle such that the natural map $Q\to G_Q$ from $Q$ to its enveloping group $G_Q$ is injective, then $Q$ is the $(G,A)$-quandle for an appropriate group $G$ and its subset $A$.
Also we introduce the free product of quandles and study this construction for $(G,A)$-quandles. In addition, we classify all finite quandles with enveloping group $\mathbb{Z}^2$.

~\\
\noindent\emph{Keywords: quandle, enveloping group, free product.} \\
~\\
\noindent\emph{Mathematics Subject Classification: 20N02,  57M27.}
\end{abstract}
\section{Introduction}

 A quandle is an algebraic system whose axioms are derived from the Reidemeister moves on oriented link diagrams. Such algebraic systems were introduced independently by Joyce \cite{Joy} and Matveev \cite{Mat} as an invariant for knots in $\mathbb{R}^3$. More precisely,  to each oriented diagram $D_K$ of an oriented knot $K$ in $\mathbb{R}^3$ one can associate the quandle $Q(K)$ which does not change if we apply the Reidemeister moves to the diagram $D_K$.  Moreover, Joyce and Matveev proved that two knot quandles $Q(K_1)$ and $Q(K_2)$ are isomorphic if and only if $K_1$ and $K_2$ are weakly equivalent, i.~e. there exists a homeomorphism of $\mathbb{R}^3$ (possibly, orientation reversing) which maps $K_1$ to $K_2$.  Over the years, quandles have been investigated by various authors in order to construct new
invariants for knots and links (see, for example, \cite{Carter, FenRou, Kamada, NanSinSin, Nelson}).

A lot of quandles can be constructed from groups. The most common example of a quandle is the conjugation quandle ${\rm Conj}(G)$ of a group $G$, i.~e. the quandle $\langle G,*\rangle$ with $x*y=y^{-1}xy$ for $x,y\in G$. If we define another operation $*$ on the group $G$, namely $x*y=yx^{-1}y$, then the set $G$ with this operation also forms a quandle. This quandle is called the core quandle of a group $G$ and is denoted by ${\rm Core}(G)$. In particular, if $G$ is an abelian group, then the quandle ${\rm Core}(G)$ is called the Takasaki quandle of the abelian group $G$ and is denoted by $T(G)$. Such quandles were studied by Takasaki in \cite{Takasaki}. If $G$ is the cyclic group of order $n$, then the Takasaki quandle $T(G)$ is called the dihedral quandle of order $n$ and is denoted by ${\rm R}_n$. Other examples of quandles arising from groups can be found, for example, in \cite{BarDeySin, BarNasSin}. Algebraic properties of quandles were studied in \cite{BarDeySin, BarNasSin, BarPasSin, BarSinSin, ElhFerTsv, ElhSaiZap}.

We say that a qandle $Q$ is embedded into a group, if there is a group $G$ such that $Q \leq {\rm Conj}(G)$.  If $Q \leq {\rm Conj}(G)$, then there exists a subgroup $H\leq G$ such that $Q$ is a union of conjugacy classes of $H$. As was noticed in \cite[Section 4]{Joy}, not every quandle is embedded into a group. The question of describing quandles which are embedded into groups was formulated in \cite[Question 3.1]{BarDeySin}.

In the present paper, we introduce a new way of constructing quandles from groups.
For a group $G$ and its subset $A$ we construct the quandle $Q(G,A)$ which is called the $(G,A)$-quandle and study properties of this quandle. In particular, we prove that if the natural map $Q\to G_Q$ from a quandle $Q$ to its enveloping group $G_Q$ is injective, then there exists a subset $A$ in $G_Q$ such that $Q=Q(G_Q,A)$ (Theorem~\ref{quasiuniv}). As a consequence, we prove that commutative quandles, latin quandles, simple quandles and Takasaki quandles are $(G,A)$-quandles (Corollary~\ref{simlatcom} and Theorem~\ref{fintak}). Also we introduce the notion of the free product $Q*P$ of quandles $Q$, $P$. We prove that if $Q,P$ are quandles such that the natural maps $Q\to G_Q$, $P\to G_P$ are injective, and $A,B$ are sets of representatives of orbits of $Q$, $P$ under the action of inner automorphisms, then $Q*P=Q(G_Q*G_P,A\cup B)$, where $G_Q*G_P$ denotes the free product of the groups $G_Q$, $G_P$ (Theorem~\ref{freere}).

The paper is organized as follows. In Section~\ref{secfr}, we give preliminaries about quandles. In Section~\ref{U1mR}, we find explicit presentations of enveloping groups of some quandles. In Section~\ref{GAprop}, we introduce the $(G,A)$-quandles and study some proerties of these quandles. In Section~\ref{latinica}, we prove that commutative, latin and simple quandles are $(G,A)$-quandles. In Section~\ref{taktaktak}, we prove that Takasaki quandles are $(G,A)$-quandles. Finally, in Section~\ref{freeqi}, we introduce the notion of the free product of quandles and study the free products of $(G,A)$-quandles.
\section{Preliminary definitions and examples}\label{secfr}
A rack $R$ is an algebraic system with one binary algebraic operation $(x,y)\mapsto x*y$ which satisfies the following two axioms:
\begin{itemize}
\item[(r1)] the map $S_x:y\mapsto y*x$ is a bijection of $R$ for all $x\in R$,
\item[(r2)] $(x*{y})*z=(x*z)*({y*z})$ for all $x,y,z\in R$.
\end{itemize}
Axioms (r1) and (r2) imply that the map $S_x$ for $x\in R$ is an automorphism of $R$. The group ${\rm Inn}(R)=\langle S_x~|~x\in R\rangle$  generated by all $S_x$ for $x\in R$ is called  the group of inner automorphisms of $R$. The group ${\rm Inn}(R)$ acts on $R$, the orbit of an element $x\in R$ under this action is denoted by ${\rm Orb}(x)$ and is called the orbit of $x$. If $R={\rm Orb}(x)$ for some $x\in R$, then $R$ is said to be connected. For the sake of simplicity for elements $x,y\in R$ we denote by $y*^{-1}x$ the element $S_x^{-1}(y)$, and sometimes we denote by $y*^1x=y*x$. Generally, the operation $*$ is not associative. For elements
$x_1,\dots,x_n\in R$ and integers $\varepsilon_2,\dots,\varepsilon_n\in\{\pm1\}$ we denote by
$$x_1*^{\varepsilon_2}x_2*^{\varepsilon_3}x_3*^{\varepsilon_4}\dots *^{\varepsilon_n}x_n=((\dots((x_1*^{\varepsilon_2}x_2)*^{\varepsilon_3}x_3)*^{\varepsilon_4}\dots)*^{\varepsilon_n}x_n).$$
	A rack $R$ which satisfies an additional axiom
\begin{itemize}
\item[(q1)] $x*x=x$ for all $x\in R$
\end{itemize}
is called a quandle. The simplest example of a quandle is the trivial quandle on a set $X$, that is the quandle $Q=\langle X,*\rangle$, where $x*y=x$ for all $x,y\in X$. If $|X|=n$, then the trivial quandle on $X$ is denoted by $T_n$.

A free quandle on a nonempty set $X$ is a quandle  $FQ(X)$ together with a map $\varphi : X \to FQ(X)$  such that for any other map $\rho : X \to Q$, where $Q$ is a quandle, there exists a
unique homomorphism $\overline{\rho} : FQ(X) \to Q$ such that $\overline{\rho}\circ\varphi = \rho$. The free quandle is unique up to isomorphism. The definition of the free rack $FR(X)$ is the same. We use the following construction of the  free rack $FR(X)$ and the free quandle $FQ(X)$ on the set $X$ of generators (see \cite{FenRou} and \cite[Section~8.6]{Kam2}).

Let $F(X)$ be the free group with the free generators $X$. On the set $X\times F(X)$ define the operation
$$ (a, u) * (b, v) = (a, u v^{-1} b v)$$
for $a, b \in X$, $u, v \in F(X)$. The algebraic system $FR(X) =(X\times F(X), *) $ is the free rack on $X$. For $(a, u), (b, v)\in FR(X)$ the element $(a, u) *^{-1} (b, v)$ is given by
$$(a, u) *^{-1} (b, v) = (a, u v^{-1} b^{-1} v).$$
The free quandle $FQ(X)$ on $X$ can be constructed as the quotient of $FR(X)$ modulo the equivalence relation $(a,w)\sim(a,aw)$ for $a\in X$, $w\in F(X)$. The construction of the free quandle $FQ(X)$ as a subquandle of  the conjugation quandle ${\rm Conj}(F(X))$ of the free group $F(X)$ is introduced in \cite[Theorem 4.1]{Joy}. If $X=\{x_1,\dots,x_n\}$ is a finite set, then we denote the free quandle $FQ(X)$ by $FQ_n$.

A lot of interesting examples of quandles come from groups. If $G$ is a group and $x,y\in G$, then denote by $x*y=y^{-1}xy$. The algebraic system $\langle G,*\rangle$ is a quandle which is called the conjugation quandle of $G$ and is denoted by ${\rm Conj}(G)$. The map ${\rm Conj}$ which maps $G$ to ${\rm Conj}(G)$ is a functor from the category of groups to the category of quandles.

For a quandle $Q$ denote by $G_Q$ the group with the set of generators $Q$ and the set of relations $x*y=y^{-1}xy$ for all $x,y\in Q$. The group $G_Q$ is called the enveloping group of the quandle $Q$ \cite[Section 6]{Joy} (see also \cite{BarNasSin}). For a quandle $Q$ there exists a homomorphism $Q\to T_n$ from $Q$ to the trivial quandle with $n$ (number of orbits in $Q$) elements. This homomorphism sends all elements from ${\rm Orb}(x)$ to $x$. This homomorphism induces the surjective homomorphism $G_Q\to G_{T_n}=\mathbb{Z}^n$, therefore $G_Q$ is always  infinite.

Joyce \cite{Joy} denoted the group $G_Q$ by ${\rm Adconj}(Q)$. The map ${\rm  Adconj}$ which maps $Q$ to $G_Q$ is a functor from the category of quandles to the category of groups which is adjoint to the functor ${\rm Conj}$. There exists a natural map $\theta:Q\to G_Q$ which maps an element $x\in Q$ to the corresponding generator of $G_Q$. This map $\theta$ satisfies the following universal property: for any quandle homomorphism $f : Q \to {\rm Conj}(G)$ for a group $G$, there exists a unique group  homomorphism $F : G_Q \to G$ such that $f = F \circ \theta$.

\section{Enveloping groups of certain quandles}\label{U1mR}
For a group $G$ denote by ${\rm Conj}_{-1}(G)$ the quandle $\langle G,*\rangle$ with the operation given by $x*y=yxy^{-1}$ for $x,y\in G$. This quandle is obtained from the quandle ${\rm Conj}(G)$ changing the operations $*$ and $*^{-1}$. The following way of constructing a new quandle from given quandles $Q_1, Q_2$ is introduced in \cite[Proposition 9.2]{BarNasSin}.
\begin{prop}\label{oldunion}
Let $\langle Q_1, *\rangle$, $\langle Q_2, \circ\rangle$ be quandles and $\sigma: Q_1 \to  {\rm Conj}_{-1} \left({\rm Aut}(Q_2) \right)$, $\tau: Q_2 \to  {\rm Conj}_{-1} \left({\rm Aut}(Q_1) \right)$ be quandle homomorphisms. Then the set $Q=Q_1 \sqcup Q_2$ with the operation
$$
x\star y=\begin{cases}
x*y,& x, y \in Q_1, \\
x\circ y,  &x, y \in Q_2, \\
{\tau(y)}(x),  &x \in Q_1, y \in Q_2, \\
{\sigma(y)}(x) &x \in Q_2, y \in Q_1.
\end{cases}
$$
is a quandle if and only if the following conditions hold:
\begin{enumerate}
\item $\tau(z)(x)* y=\tau\left(\sigma(y)(z)\right)(x* y)$ for $x, y \in Q_1$ and $z \in Q_2$,
\item $\sigma(z)(x)\circ y=\sigma\left(\tau(y)(z)\right)(x\circ y)$ for $x, y \in Q_2$ and $z \in Q_1$.
\end{enumerate}
\end{prop}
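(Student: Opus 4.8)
The plan is to verify the three rack/quandle axioms for the operation $\star$ on $Q=Q_1\sqcup Q_2$ directly, in each case splitting according to which of $Q_1$, $Q_2$ the variables lie in. Axioms (q1) and (r1) will hold with no assumption on $\sigma$, $\tau$ beyond their being quandle homomorphisms. Indeed, if $x\in Q_1$ then $x\star x=x*x=x$ because $Q_1$ is a quandle, and symmetrically for $x\in Q_2$, while no mixed case arises for (q1). For (r1), fix $y\in Q_1$; then $S_y\colon x\mapsto x\star y$ restricts to the map $x\mapsto x*y$ on $Q_1$, which is a bijection of $Q_1$, and to $\sigma(y)$ on $Q_2$, which is a bijection of $Q_2$ since $\sigma(y)\in{\rm Aut}(Q_2)$; hence $S_y$ is a bijection of $Q$. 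The case $y\in Q_2$ is symmetric, using $\tau(y)\in{\rm Aut}(Q_1)$.

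The substance is axiom (r2), $(x\star y)\star z=(x\star z)\star(y\star z)$, which I would check over the eight cases indexed by the memberships of $x$, $y$, $z$. The two homogeneous cases $x,y,z\in Q_1$ and $x,y,z\in Q_2$ are precisely (r2) for $Q_1$ and for $Q_2$. In the two cases where $z$ is alone on its side (say $x,y\in Q_1$, $z\in Q_2$, and its mirror), both sides reduce to $\tau(z)(x*y)$, respectively $\sigma(z)(x\circ y)$, because $\tau(z)$, $\sigma(z)$ are automorphisms of $Q_1$, $Q_2$; so these hold unconditionally. In the two cases where $x$ is alone on its side (say $x\in Q_2$, $y,z\in Q_1$, and its mirror), expanding $\star$ shows the required identity is equivalent to the equality of automorphisms $\sigma(z)\sigma(y)=\sigma(y*z)\,\sigma(z)$ on $Q_2$ (respectively $\tau(z)\tau(y)=\tau(y\circ z)\,\tau(z)$ on $Q_1$), and this is automatic: $\sigma$ is a quandle homomorphism into ${\rm Conj}_{-1}({\rm Aut}(Q_2))$, whose operation $a*b=bab^{-1}$ gives $\sigma(y*z)=\sigma(z)\sigma(y)\sigma(z)^{-1}$, so $\sigma(y*z)\sigma(z)=\sigma(z)\sigma(y)$; similarly for $\tau$. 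Finally, in the two cases where $y$ is alone on its side (say $x,z\in Q_1$, $y\in Q_2$, and its mirror), expanding $\star$ and relabelling variables produces, after the obvious substitution, precisely condition (1), respectively condition (2). Assembling the cases, $\star$ satisfies (r2) if and only if (1) and (2) both hold, which together with the previous paragraph establishes the proposition.

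I do not anticipate a real obstacle; the argument is a bounded collection of routine verifications. The one place needing care is the bookkeeping in the two "$y$ alone" cases: one must keep the roles of the variables straight and check that the renaming genuinely reproduces conditions (1) and (2) exactly as stated. It is also worth recording the structural reason the construction works at all — the use of ${\rm Conj}_{-1}$ rather than ${\rm Conj}$ in the targets of $\sigma$ and $\tau$ is exactly what makes the "$x$ alone" cases hold for free.
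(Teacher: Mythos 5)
Your verification is correct and complete: axioms (q1) and (r1) hold unconditionally, the six cases of (r2) with $x,y,z$ homogeneous, with $z$ alone, or with $x$ alone hold for free (the "$x$ alone" cases precisely because $\sigma,\tau$ land in ${\rm Conj}_{-1}({\rm Aut}(Q_2))$, ${\rm Conj}_{-1}({\rm Aut}(Q_1))$, giving $\sigma(y*z)\sigma(z)=\sigma(z)\sigma(y)$ and its analogue for $\tau$), and the two "$y$ alone" cases are, after the relabelling you indicate, exactly conditions (1) and (2), which yields the stated equivalence. Note that the present paper offers no proof of this statement at all—it is quoted from \cite{BarNasSin}—so there is nothing internal to compare against; your direct case-by-case check is the standard argument for this result.
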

\noindent The quandle $\langle Q_1\sqcup Q_2,\star\rangle$ introduced in Proposition~\ref{oldunion} is denoted by $Q=Q_1\underset{\sigma,\tau}{\sqcup} Q_2$ and is called the union of quandles $Q_1,Q_2$ (with respect to $\sigma,\tau$).

A subquandle $P$ of a quandle $Q$ is said to be normal if
$p * q$ belongs to $P$ for all  $p\in P,q\in Q$. It is clear that if $P$ is a normal subquandle of $Q$, then $P$ is a union of orbits in $Q$. Quandles $Q_1, Q_2$ are obviously normal in $Q=Q_1\underset{\sigma,\tau}{\sqcup} Q_2$, therefore if $Q$ is  a connected quandle, then it cannot be presented as a union of two subquandles. If $Q$ is not connected, $Q_1$ is an orbit of some element from $Q$, $Q_2=Q\setminus Q_1$, then $Q=Q_1\underset{\sigma,\tau}{\sqcup} Q_2$ for appropriate $\sigma,\tau$. So, a quandle $Q$ can be presented as a union of two quandles if and only if $Q$ is not connected. The dihedral quandle ${\rm R}_{2n}$ of an even order can be presented as a union of two subquandles in a very nice way.
\begin{prop}\label{dihdecom} ${\rm R}_{2n}={\rm R}_n\underset{\sigma,\tau}{\sqcup} {\rm R}_n$ for appropriate $\sigma,\tau$.
\end{prop}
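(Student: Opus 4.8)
The plan is to realise ${\rm R}_{2n}$ concretely as the Takasaki quandle $T(\mathbb{Z}_{2n})$, that is, $\mathbb{Z}_{2n}$ with $x*y=2y-x\pmod{2n}$, to split off its two orbits, to identify each with ${\rm R}_n$, and then to read the maps $\sigma,\tau$ off the ``mixed'' products. First I would describe the orbits. Since $S_y(x)-x=2(y-x)$, every inner automorphism preserves parity, so the even residues $E=\{0,2,\dots,2n-2\}$ and the odd residues $O=\{1,3,\dots,2n-1\}$ are unions of orbits; as $S_y(0)=2y$ ranges over all of $E$ and $S_y(1)=2y-1$ over all of $O$, these are precisely the two orbits, each of size $n$. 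Thus ${\rm R}_{2n}$ is disconnected and, by the discussion preceding the proposition, ${\rm R}_{2n}=E\underset{\sigma,\tau}{\sqcup}O$ for appropriate $\sigma,\tau$; what remains is to identify $E$ and $O$ with ${\rm R}_n$ and to exhibit $\sigma,\tau$ explicitly.

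Next I would take $Q_1=Q_2={\rm R}_n$ on $\mathbb{Z}_n$ with $a*b=2b-a\pmod n$ and introduce the bijections $\phi_1\colon Q_1\to E$, $\phi_1(a)=2a$, and $\phi_2\colon Q_2\to O$, $\phi_2(a)=2a+1$ (both well defined and bijective because $2kn\equiv0\pmod{2n}$). A short modular computation gives $\phi_1(a)*\phi_1(b)=\phi_1(2b-a)$ and $\phi_2(a)*\phi_2(b)=\phi_2(2b-a)$, so $\phi_1,\phi_2$ are quandle isomorphisms onto the subquandles $E$ and $O$ of ${\rm R}_{2n}$; and for the mixed products $\phi_1(a)*\phi_2(b)=\phi_1(-a+2b+1)$ and $\phi_2(a)*\phi_1(b)=\phi_2(-a+2b-1)$. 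Hence, transporting the operation of ${\rm R}_{2n}$ along $\phi_1\sqcup\phi_2$, the cross terms take exactly the form required in Proposition~\ref{oldunion}, with $\tau(b)\colon x\mapsto-x+2b+1$ and $\sigma(b)\colon x\mapsto-x+2b-1$; each of these is an affine involution of $\mathbb{Z}_n$, hence lies in ${\rm Aut}({\rm R}_n)$.

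Finally I would check that $\sigma\colon Q_1\to{\rm Conj}_{-1}({\rm Aut}(Q_2))$ and $\tau\colon Q_2\to{\rm Conj}_{-1}({\rm Aut}(Q_1))$ are quandle homomorphisms and that conditions (1) and (2) of Proposition~\ref{oldunion} hold, after which $\,{\rm R}_{2n}\cong Q_1\underset{\sigma,\tau}{\sqcup}Q_2={\rm R}_n\underset{\sigma,\tau}{\sqcup}{\rm R}_n\,$ by the definition of the union. In fact all of this comes essentially for free: $\phi_1\sqcup\phi_2$ carries the genuine quandle ${\rm R}_{2n}$ to $Q_1\sqcup Q_2$ equipped with the operation $\star$ of Proposition~\ref{oldunion}, so that operation is already known to be a quandle operation, and then the ``only if'' direction of the proposition forces (1), (2) and the homomorphism property of $\sigma,\tau$. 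Should one prefer a self-contained verification, one checks $\tau(b)^2=\id=\sigma(b)^2$ and, for example, $\tau(2b'-b)=\tau(b')\tau(b)\tau(b')$ by a one-line affine computation in $\mathbb{Z}_n$. I do not expect a genuine obstacle here; the only point demanding care is the shift $+1$ in $\phi_2$ — it is precisely this choice that makes the odd orbit isomorphic to ${\rm R}_n$ on the nose — together with keeping the two moduli $2n$ and $n$ apart throughout the computation.
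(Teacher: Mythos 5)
Your proof is correct and follows essentially the same route as the paper: decompose ${\rm R}_{2n}$ into its two parity orbits, check via the halving maps that each orbit with the induced operation is isomorphic to ${\rm R}_n$, and invoke the observation preceding the proposition that a disconnected quandle is a union of an orbit and its complement. The only difference is that you additionally write out $\sigma,\tau$ explicitly as the affine involutions $x\mapsto -x+2b\pm1$, which the paper leaves implicit in the phrase ``for appropriate $\sigma,\tau$''.
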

\begin{proof} The quandle ${\rm R}_{2n}$ consist of the elements $x_0,\dots,x_{2n-1}$ and it is a union of two orbits ${\rm Orb}(x_0)=\{x_{2m}~|~m=0,\dots,n-1\}$, ${\rm Orb}(x_1)=\{x_{2m+1}~|~m=0,\dots,n-1\}$. The formulas
\begin{align}
\notag x_{2m+1}*x_{2k+1}=x_{2(2k-m)+1},&&x_{2m}*x_{2k}=x_{2(2k-m)}
\end{align}
show that both ${\rm Orb}(x_0)$, ${\rm Orb}(x_1)$ are isomorphic to ${\rm R}_n$.
\end{proof}

\subsection{Quandles with the free abelian enveloping group}\label{exenvexen}
Let $n,m$ be positive integers and $T_n=\{x_0,\dots,x_{n-1}\}$, $T_m=\{y_0,\dots,y_{m-1}\}$ be trivial quandles. Denote by $\sigma$ the map which send every element from $T_n$ to the cyclic permutation $f=(y_0, y_1,\dots, y_{m-1})$, and by $\tau$ the map which send every element from $T_m$ to the cyclic permutation $g=(x_0, x_1,\dots, x_{n-1})$. It is easy to check that the maps $\sigma,\tau$ satisfy Proposition \ref{oldunion}. Therefore we can define the union $U(n,m)=T_n\underset{\sigma,\tau}{\sqcup} T_m$ of $T_n, T_m$ with respect to $\sigma,\tau$. The operation in $U(n,m)=\{x_0,\dots,x_{n-1},y_0,\dots,y_{m-1}\}$ is given by the formulas
\begin{align}
\notag x_i*x_j&=x_i,&& x_i*y_r=x_{i+1~({\rm mod}~n)},\\
\label{envel} y_r*y_s&=y_r, &&y_r*x_i=y_{r+1~({\rm mod}~m)}.
\end{align}
It is clear that $U(1,1)=T_2$. The quandle  $U(1,2)$ is isomorphic to the quandle
$$Cs(4)=\langle x,y,z~|~x*y=z, x*z=x, z*x=z, z*y=x, y*x=y, y*z=y\rangle$$
introduced by Joyce in \cite[Section 6]{Joy}.

\begin{theorem}\label{abenvel} Let $Q$ be a finite quandle. Then $G_Q=\mathbb{Z}\times\mathbb{Z}$ if and only if $Q$ is isomorphic to $U(n,m)$ for coprime integers $n,m$.
\end{theorem}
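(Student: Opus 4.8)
The statement is an equivalence. I would deduce the forward direction (from $G_Q\cong\Z^2$ to a copy of some $U(n,m)$) from a structural analysis of orbits and inner automorphisms, and I would settle both the converse and the coprimality clause by computing $G_{U(n,m)}$ explicitly for all $n,m$.

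\emph{From $G_Q\cong\Z^2$ to $Q\cong U(n,m)$.} Let $Q$ be a finite quandle with $G_Q\cong\Z^2$ and let $k$ be its number of orbits. The canonical epimorphism $Q\to T_k$ induces an epimorphism $G_Q\twoheadrightarrow G_{T_k}=\Z^k$, so $k\le 2$; and the natural map $\theta\colon Q\to{\rm Conj}(G_Q)$, whose image is the set of defining generators of $G_Q$, is constant on the orbits of $Q$ (since ${\rm Conj}$ of an abelian group is a trivial quandle), so $G_Q$ is generated by at most $k$ elements, which forces $k\ge 2$ because $\Z^2$ is not cyclic. Hence $k=2$; let $\mathcal O_1,\mathcal O_2$ be the orbits, $n=|\mathcal O_1|$, $m=|\mathcal O_2|$. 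The rack identity $S_{x*y}=S_yS_xS_y^{-1}$ shows that $x\mapsto S_x^{-1}$ extends to an epimorphism $G_Q\twoheadrightarrow{\rm Inn}(Q)$, so ${\rm Inn}(Q)$, being finite and a quotient of $\Z^2$, is abelian and generated by $\bar u:=S_a$ and $\bar v:=S_b$ for any $a\in\mathcal O_1$, $b\in\mathcal O_2$ (the choices are irrelevant, since the map $x\mapsto S_x^{-1}$ factors through $\theta$, which is constant on orbits). For every $a'\in\mathcal O_1$ one has $\bar u(a')=S_{a'}(a')=a'*a'=a'$ by axiom (q1), so $\bar u$ fixes $\mathcal O_1$ pointwise; symmetrically $\bar v$ fixes $\mathcal O_2$ pointwise. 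Since $\langle\bar u,\bar v\rangle={\rm Inn}(Q)$ is transitive on the orbit $\mathcal O_1$ while $\bar u|_{\mathcal O_1}={\rm id}$, the permutation $\bar v|_{\mathcal O_1}$ is an $n$-cycle; likewise $\bar u|_{\mathcal O_2}$ is an $m$-cycle. Labelling $\mathcal O_1=\{x_0,\dots,x_{n-1}\}$ along the cycle of $\bar v$ and $\mathcal O_2=\{y_0,\dots,y_{m-1}\}$ along the cycle of $\bar u$, the products $x_i*x_j=\bar u(x_i)=x_i$, $x_i*y_r=\bar v(x_i)=x_{i+1}$, $y_r*x_i=\bar u(y_r)=y_{r+1}$, $y_r*y_s=\bar v(y_r)=y_r$ (indices mod $n$, resp.\ mod $m$) are exactly the operation~\eqref{envel} of $U(n,m)$, so $Q\cong U(n,m)$.

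\emph{Computing $G_{U(n,m)}$.} From the defining relations all $x_i$ commute, all $y_r$ commute, conjugation by $y_0$ cyclically shifts the $x_i$, and conjugation by $x_0$ cyclically shifts the $y_r$; hence $G_{U(n,m)}=\langle x_0,y_0\rangle$ with $x_i=y_0^{-i}x_0y_0^{i}$ and $y_r=x_0^{-r}y_0x_0^{r}$. The relations $y_0*x_i=y_1=y_0*x_{i+1}$ force $x_{i+1}x_i^{-1}$ to be independent of $i$; putting $c=x_0x_1^{-1}$ gives $x_i=c^{-i}x_0$, and the wrap-around $x_n=x_0$ yields $c^{n}=1$. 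A parallel computation with the $y$-relations (indices mod $m$) gives $y_r=c^{r}y_0$ and then $c^{m}=1$, and $c$ is easily seen to commute with $x_0$ and with $y_0$, hence to be central. One then checks that $a\mapsto x_0$, $b\mapsto y_0$, $c\mapsto x_0x_1^{-1}$ defines an isomorphism from
\[
H_{n,m}=\langle a,b,c \mid [a,c]=[b,c]=1,\ c^{n}=c^{m}=1,\ b^{-1}ab=c^{-1}a\rangle
\]
onto $G_{U(n,m)}$ (with inverse $x_i\mapsto c^{-i}a$, $y_r\mapsto c^{r}b$), and that $H_{n,m}\cong(\Z\times\Z/d)\rtimes\Z$, where $d=\gcd(n,m)$ and the outer $\Z$ fixes the torsion factor $\Z/d$ and sends the free generator to $c^{-1}a$. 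This group is torsion-free -- equivalently abelian, equivalently isomorphic to $\Z^2$ -- if and only if $d=1$. Together with the previous paragraph this proves the theorem: if $G_Q\cong\Z^2$ then $Q\cong U(n,m)$ with $\gcd(n,m)=1$, and conversely $G_{U(n,m)}\cong\Z^2$ whenever $\gcd(n,m)=1$.

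\emph{Expected main obstacle.} The crux is the computation of $G_{U(n,m)}$, in particular the claim that $\langle c\rangle$ has order \emph{exactly} $\gcd(n,m)$: one must collect \emph{every} relation forced on $c$ -- both wrap-arounds contribute, giving $c^n=1$ and $c^m=1$, and no relation identifies further powers of $c$ -- which is most safely done through the explicit presentation $H_{n,m}$ above together with a verified inverse map, or via a Reidemeister--Schreier analysis of the normal abelian subgroup $\langle x_0,\dots,x_{n-1}\rangle$ and the splitting $G_{U(n,m)}=\langle x_0,\dots,x_{n-1}\rangle\rtimes\langle y_0\rangle$. A minor point to double-check is the direction of conjugation in the epimorphism $G_Q\to{\rm Inn}(Q)$; it does not affect the argument, which only uses that orbit-equivalent generators have equal images.
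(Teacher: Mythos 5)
Your proof is correct, and while its overall skeleton matches the paper's (show a quandle with $G_Q=\mathbb{Z}^2$ has exactly two orbits, each a trivial subquandle with the cross-action given by a single cycle, then compute $G_{U(n,m)}$ from a two-generator presentation with a central element killed by both $n$ and $m$), you execute both halves somewhat differently. For the structural half you work directly with the epimorphism $G_Q\twoheadrightarrow{\rm Inn}(Q)$ and the two permutations $S_a,S_b$, which lets you bypass the union-of-quandles machinery of Proposition~\ref{oldunion} that the paper invokes to write $Q={\rm Orb}(x)\underset{\sigma,\tau}{\sqcup}{\rm Orb}(y)$ and to argue that $\sigma,\tau$ are constant cycle-valued maps; your route is a little more self-contained, the paper's makes the decomposition reusable elsewhere in the text. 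For the computational half you correctly identify the only delicate point, namely that the central element $c$ (the paper's $[x_0,y_0]$) has order exactly $\gcd(n,m)$ in the group presented by (\ref{GUnm}); the paper concludes the coprimality clause from (\ref{GUnm}) in one sentence, implicitly using this fact, whereas your explicit model $H_{n,m}\cong(\mathbb{Z}\times\mathbb{Z}/d)\rtimes\mathbb{Z}$ with a verified two-sided map nails it down and so actually firms up a step the paper treats tersely. One small presentational remark: the relation $y_0*x_i=y_0*x_{i+1}$ by itself only gives $x_{i+1}x_i^{-1}\in C(y_0)$; independence of $i$ needs to be combined with $x_i=y_0^{-i}x_0y_0^{i}$ (or with centrality of $c$), which your subsequent computation does supply, so this is a matter of wording rather than a gap.
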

\begin{proof}From formulas (\ref{envel}) we conclude that $G_{U(n,m)}$ is generated by the elements $x_0,\dots,x_{n-1},y_0,\dots,y_{m-1}$ and has the defining relations
\begin{align}
\label{comxy2} [x_i,x_j]&=[y_r,y_s]=1&&\text{for}~i,j=0,\dots, n-1,~r,s=0,\dots,m-1\\
\label{YXY1} y_r^{-1}x_iy_r&=x_{i+1~({\rm mod}~n)}&&\text{for}~i=0,\dots, n-1,~r=0,\dots,m-1\\
\label{XYX2} x_i^{-1}y_rx_i&=y_{r+1~({\rm mod}~m)}&&\text{for}~i=0,\dots, n-1,~r=0,\dots,m-1
\end{align}
From formulas (\ref{YXY1}), (\ref{XYX2}) we see that
\begin{align}
\label{xiii}x_i=y_0^{-i}x_0y_0^i,~y_r=x_0^{-r}y_0x_0^r&& \text{for}~i=0,\dots, n-1, r=0,\dots,m-1,
\end{align}
therefore we can delete the the elements $x_1,\dots,x_{n-1},y_1,\dots,y_{m-1}$ from the generating set of $G_{U(n,m)}$. Using formulas (\ref{xiii}) relations  (\ref{comxy2}) can be rewritten in the form
\begin{align}
\notag [x_0,x_0^{y_0^j}]=[y_0,y_0^{x_0^{s}}]=1&&\text{for}~j=0,\dots, n-1,~s=0,\dots,m-1.
\end{align}
These relations are equivalent to only two relations $[x_0,[x_0,y_0]]=[y_0,[x_0,y_0]]=1$ which say that the element $[x_0,y_0]$ belongs to the center of $G_{U(n,m)}$. This fact and formulas (\ref{xiii}) imply that
\begin{align}
\label{yiii}x_i=x_0[x_0,y_0]^i,~y_r=y_0[x_0,y_0]^r&& \text{for}~i=0,\dots, n-1, r=0,\dots,m-1,
\end{align}
Using this fact we conclude that $G_{U(n,m)}$ is generated by the elements $x_0,y_0$ and has the following defining relations (which follow from (\ref{comxy2}), (\ref{YXY1}), (\ref{XYX2})).
\begin{align}
\label{GUnm} [x_0,[x_0,y_0]]=[y_0,[x_0,y_0]]=[x_0,y_0]^n=[x_0,y_0]^m=1.
\end{align}
If $n,m$ are coprime, relations (\ref{GUnm}) imply that $G_{U(n,m)}=\mathbb{Z}\times\mathbb{Z}$. The if-part of the theorem is proved.

Let $Q$ be a quandle such that $G_Q=\mathbb{Z}\times\mathbb{Z}$. Decompose $Q$ into the union of the orbits $Q={\rm Orb}(x_1)\sqcup \dots \sqcup{\rm Orb}(x_k)$ and consider the homomorphism $Q\to T_k$ which maps each element from ${\rm Orb}(x_k)$ to the same element (which is denoted by the same letter $x_k$). From the definition of the enveloping group follows that
$$\mathbb{Z}^2=G_Q/[G_Q,G_Q]=G_T=\mathbb{Z}^k,$$
therefore $k=2$ and $Q$ is a union of only two orbits $Q={\rm Orb}(x)\sqcup {\rm Orb}(y)$. Therefore $Q$ can be presented in the form
\begin{equation}\label{decompose}
Q={\rm Orb}(x)\underset{\sigma,\tau}{\sqcup} {\rm Orb}(y)
\end{equation}
for appropriate $\sigma,\tau$. Consider now the homomorphism $\varphi:Q\to {\rm Conj}(G_Q)$ which maps every element from $Q$ to the corresponding generator of $G_Q$. Since $G_Q$ is abelian, $\varphi({\rm Orb}(x))=\{\varphi(x)\}$, $\varphi({\rm Orb}(y))=\{\varphi(y)\}$. Since ${\rm Inn}(Q)$ is the quotient of $G_Q$,  for all $a\in Q$, $z\in {\rm Orb}(x)$, $t\in{\rm Orb}(y)$  we have
\begin{align}
\label{trivactions} a*z=a*x,&& a*t=a*y,
\end{align}
From these equalities follows that for all $a,b\in {\rm Orb}(x)$ we have
$$a*b=a*x=a*a=a,$$
i.~e. ${\rm Orb}(x)=T_n$ is a trivial quandle. Similarly we can show that ${\rm Orb}(y)=T_m$ is a trivial quandle. So, equality (\ref{decompose}) can be rewritten in the following form
\begin{equation}\label{decompose2}
Q=T_n\underset{\sigma,\tau}{\sqcup} T_m
\end{equation}
for appropriate $\sigma,\tau$. From equalities (\ref{trivactions}) follows that for $a\in {\rm Orb}(x)=T_n$ and  $b\in {\rm Orb}(y)=T_m$ we have
$$a*b=a*y$$
but by formula (\ref{decompose2}) and the definition of the union of quandles (Proposition \ref{oldunion}) we have
$$\tau(b)(a)=a*b=a*y=\tau(y)(a),$$
therefore $\tau(b)=\tau(y)=g$ for all $b\in {\rm Orb}(y)$ and some $g\in {\rm Aut}({\rm Orb}(x))={\rm Aut}(T_n)$. Similarly we can prove that $\sigma$ is a constant map which maps each element from  ${\rm Orb}(x)$ to some automorphism $f\in {\rm Aut}({\rm Orb}(y))={\rm Aut}(T_m)$. Since ${\rm Orb}(x)=T_n$ forms an orbit in $Q$ and the only elements from $Q$ which act nontrivially on  $T_n$ belong to ${\rm Orb}(y)$ (these elements act on ${\rm Orb}(x)=T_n$ as $g$), the group generated by $g$ must act transitively on $T_n$, i.~e. $g$ is a cycle of length $n$. Similarly we can prove that $f$ is a cycle of length $m$. Now formula (\ref{decompose2}) implies that $Q=U(n,m)$. From formulas (\ref{GUnm}) and the fact that $G_Q=\mathbb{Z}\times\mathbb{Z}$ follows that $m,n$ are coprime.
\end{proof}
\begin{cor}
There is an infinite number of finite pairwise non-isomorphic quandles with the same enveloping group.
\end{cor}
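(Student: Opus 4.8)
The plan is to invoke Theorem~\ref{abenvel} directly. That theorem tells us precisely which finite quandles have enveloping group $\mathbb{Z}\times\mathbb{Z}$: they are exactly the quandles $U(n,m)$ with $\gcd(n,m)=1$. So to prove the corollary it suffices to exhibit an infinite family of such quandles, no two of which are isomorphic.

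First I would single out the one-parameter subfamily $\{U(1,m)\mid m\geq 1\}$. Since $1$ is coprime to every positive integer, Theorem~\ref{abenvel} applies to each member and yields $G_{U(1,m)}=\mathbb{Z}\times\mathbb{Z}$ for all $m$. Next, to see that these quandles are pairwise non-isomorphic I would just count elements: by its construction as the union $T_1\underset{\sigma,\tau}{\sqcup} T_m$, the quandle $U(n,m)=\{x_0,\dots,x_{n-1},y_0,\dots,y_{m-1}\}$ has exactly $n+m$ elements, so $|U(1,m)|=m+1$. These cardinalities are pairwise distinct, and the cardinality of a quandle is obviously an isomorphism invariant, so the $U(1,m)$, $m\geq 1$, are pairwise non-isomorphic finite quandles all having enveloping group $\mathbb{Z}\times\mathbb{Z}$.

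There is essentially no obstacle here, since the statement is a direct consequence of the classification in Theorem~\ref{abenvel}; the only points needing a word of care are that the admissible parameter set is genuinely infinite (immediate, as $\gcd(1,m)=1$ always) and that the quandles produced are genuinely distinct up to isomorphism (handled by the cardinality count). If one preferred a less degenerate family, one could equally well take $U(2,2k+1)$ for $k\geq 1$, or range over all coprime pairs $(n,m)$, but the family $U(1,m)$ already suffices to establish the corollary.
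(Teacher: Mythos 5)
Your argument is correct and is exactly the intended one: the paper states this corollary as an immediate consequence of Theorem~\ref{abenvel} (which gives infinitely many coprime pairs $(n,m)$, hence infinitely many finite quandles $U(n,m)$ with enveloping group $\mathbb{Z}\times\mathbb{Z}$), and your choice of the family $U(1,m)$ together with the cardinality count $|U(1,m)|=m+1$ supplies the pairwise non-isomorphism cleanly.
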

The following problem comes naturally from Theorem \ref{abenvel}.
\begin{problem}
Classify quandles with free abelian enveloping groups.
\end{problem}

If $n,m$ are not coprime, then from formulas (\ref{GUnm}) follows that $G_{U(n,m)}$ is nilpotent of nilpotency class $2$. Similarly to $U(n,m)$ we define the quandle
$$U(\infty,\infty)=T_{\infty}\underset{\sigma,\tau}{\sqcup} T_{\infty}$$
which is a union of two trivial quandles $\{x_i~|~i\in \mathbb{Z}\}$ and $\{y_j~|~j\in \mathbb{Z}\}$ with the operation given by
\begin{align}
\notag x_i*x_j&=x_i,&& x_i*y_r=x_{i+1},\\
\notag y_r*y_s&=y_r, &&y_r*x_i=y_{r+1}.
\end{align}
From the proof of Theorem \ref{abenvel} we conclude that the enveloping group $G_{U(\infty,\infty)}$ is the free nilpotent group of rank $2$ and class $2$.
\begin{problem}
Classify quandles with nilpotent enveloping groups.
\end{problem}

\subsection{Dihedral quandles} Let $n$ be a positive integer and ${\rm R}_n$ be the dihedral quandle of order $n$. This quandle consists of the elements $x_0,\dots,x_{n-1}$ with the operation given by $x_i*x_j=x_{2j-i~({\rm mod}~n)}$. Throughout this section we denote by $a_i$ the image of the element $x_i$ under the natural map $Q\to G_Q$.

\begin{prop}\label{2krep}Let $n$ be a positive integer. Then the enveloping group $G_{{\rm R}_{2n}}$ of the quandle ${\rm R}_{2n}$ is generated by the elements $a_0,a_1$ and is defined by the relations
\begin{equation}\label{defrel}
[a_0,a_1^2]=[a_0^2,a_1]=[a_0,(a_0a_1)^{n}]=[a_1,(a_0a_1)^{n}]=1.
\end{equation}
For $\varepsilon\in\{0,1\}$ and $r\in\{0,\dots,n-1\}$ the element $a_{2r+\varepsilon}$ from $G_{{\rm R}_{2n}}$ has the presentation $a_{2r+\varepsilon}=a_{\varepsilon}^{(a_0a_1)^r}$.
\end{prop}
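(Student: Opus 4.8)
The plan is to compare the standard presentation of $G_{{\rm R}_{2n}}$ — generators $a_0,\dots,a_{2n-1}$ with relations $a_j^{-1}a_ia_j=a_{2j-i\ ({\rm mod}\ 2n)}$ — with the proposed two-generator presentation by constructing mutually inverse homomorphisms between the two groups. First I would cut down the generating set. The relations with $j=0$ and $j=1$ say that conjugation by $a_0$ sends $a_i\mapsto a_{-i}$ and conjugation by $a_1$ sends $a_i\mapsto a_{2-i}$, so conjugation by $t:=a_0a_1$ acts as the shift $a_i\mapsto a_{i+2}$ (indices modulo $2n$). Iterating gives $a_{2r+\varepsilon}=a_\varepsilon^{t^r}$ for all $r$ — this is the last assertion of the proposition — and in particular $G_{{\rm R}_{2n}}=\langle a_0,a_1\rangle$. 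The four proposed relations now hold easily in $G_{{\rm R}_{2n}}$: conjugation by $a_0^2$ and by $a_1^2$ fixes every generator, so $a_0^2,a_1^2$ are central, giving $[a_0^2,a_1]=[a_0,a_1^2]=1$; conjugation by $t^n$ is the shift by $2n\equiv 0$, so it too fixes every generator, whence $(a_0a_1)^n=t^n$ is central and $[a_0,t^n]=[a_1,t^n]=1$. This produces a surjection $\psi$ from the group $H$ given by the proposed presentation onto $G_{{\rm R}_{2n}}$, $b_i\mapsto a_i$.

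For the reverse map $\varphi\colon G_{{\rm R}_{2n}}\to H$ I would set $a_i\mapsto c_i$ with $c_{2r+\varepsilon}:=b_\varepsilon^{(b_0b_1)^r}$ ($\varepsilon\in\{0,1\}$, $r\in\mathbb{Z}$). In $H$ the elements $b_0^2,b_1^2$ and $t^n$ (where $t=b_0b_1$) are central, by the same argument as above, and this already makes $c_i$ depend only on $i$ modulo $2n$. It remains to check the quandle relations $c_j^{-1}c_ic_j=c_{2j-i}$. Since $c_{i+2}=c_i^{t}$ holds by construction, conjugating by powers of $t$ reduces this to the cases $j\in\{0,1\}$ with $i$ arbitrary. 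These follow from the normal form
\[
t^r b_0 t^r=b_0^{2r+1}b_1^{2r},\qquad t^r b_1 t^r=b_1^{2r+1}b_0^{2r}\qquad\text{in }H,
\]
proved by a short induction on $r$ using only that $b_0^2$ and $b_1^2$ are central. Because the right-hand sides are a power of $b_0$ (or $b_1$) times a central power of $b_1^2$ (or $b_0^2$), a short rewrite and a parity split on $i$ turn each instance of $c_j^{-1}c_ic_j=c_{2j-i}$ into the vanishing of a commutator of $b_0$ or $b_1$ with a central element. Finally $\varphi$ and $\psi$ are mutually inverse on generators — using $a_{2r+\varepsilon}=a_\varepsilon^{t^r}$ one way and $c_0=b_0$, $c_1=b_1$ the other — so each is an isomorphism.

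The step I expect to be the main obstacle is establishing, inside the abstractly presented group $H$, that the four given relators actually force all of the quandle relations among the $c_i$. The whole argument rests on spotting the normal form $t^rb_0t^r=b_0^{2r+1}b_1^{2r}$; with it in hand the remaining verifications are routine commutator bookkeeping, but the naive attempt to prove $[b_0,t^rb_0t^r]=1$ directly goes in circles.
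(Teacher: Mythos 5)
Your proposal is correct and takes essentially the same route as the paper: both arguments establish that $a_0^2$, $a_1^2$ and $(a_0a_1)^n$ are central, derive $a_{2r+\varepsilon}=a_{\varepsilon}^{(a_0a_1)^r}$, and then verify that every dihedral relation $a_j^{-1}a_ia_j=a_{2j-i\ ({\rm mod}\ 2n)}$ is a consequence of the four commutator relations (your mutually inverse homomorphisms are just the Tietze-transformation argument of the paper in different packaging). Your normal form $t^rb_0t^r=b_0^{2r+1}b_1^{2r}$, together with the reduction to $j\in\{0,1\}$ by conjugating with $t$, plays exactly the role of the paper's auxiliary identity $a_{\varepsilon}^{a_{\eta}(a_0a_1)^{\varepsilon-\eta}}=a_{\varepsilon}$ in that final verification.
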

\begin{proof} Since in ${\rm R}_{2n}$ we have the equality
$$(x_0*x_1)*x_1=x_2*x_1=x_0,$$
in $G_{{\rm R}_{2n}}$ there is an equality $a_1^{-2}a_0a_1^2=a_0$, or equivalently $[a_0,a_1^2]=1$. Similarly we can show that $[a_0^2,a_1]=1$.

 Using direct calculations it is easy to see that in ${\rm R}_{2n}$ we have the identity
 $$
\notag x_{0}=x_{0}\underbrace{*x_0*x_1*x_0*x_1*\dots*x_0*x_1}_{2n~\text{symbols}~*}.
$$
therefore in $G_{{\rm R}_{2n}}$ there is an identity $a_0^{(a_0a_1)^{n}}=a_0$, or equivalently $[a_0,(a_0a_1)^{n}]=1$. Similarly we can show that the relation $[a_1,(a_0a_1)^{n}]=1$ holds in $G_{{\rm R}_{2n}}$. Therefore relation (\ref{defrel}) are true in $G_{{\rm R}_{2n}}$. Now we need to prove that $G_{{\rm R}_{2n}}$ is generated by only two elements $a_0$, $a_1$ and that all the relation of $G_{{\rm R}_{2n}}$ follow from relation (\ref{defrel}).

By the definition, the group $G_{{\rm R}_{2n}}$ has the generators $a_0,\dots,a_{2n-1}$ and is defined by the relations
\begin{equation}\label{justrel}
a_j^{-1}a_ia_j=a_{2j-i ({\rm mod}~2n)}~\text{for}~i,j\in\{0,\dots,2n-1\}.
\end{equation}
 Using direct calculations it is easy to check that for $\varepsilon\in\{0,1\}$ and $r\in\{0,\dots,n-1\}$ there is an identity
\begin{align}
\notag x_{2r+\varepsilon}&=x_{\varepsilon}*\underbrace{x_0*x_1*x_0*x_1*\dots*x_0*x_1}_{2r~\text{symbols}~*}.
\end{align}
Therefore in $G_{{\rm R}_{2n}}$ there is an identity
\begin{equation}\label{allel}
a_{2r+\varepsilon}=a_{\varepsilon}^{(a_0a_1)^r}
\end{equation}
and $G_{{\rm R}_{2n}}$ is generated by $a_0$, $a_1$. Let us prove that every relation from (\ref{justrel}) follows from relations (\ref{defrel}). Let $i=2r+\varepsilon$, $j=2s+\eta$ for $r,s\in\{0,\dots,n-1\}$, $\varepsilon,\eta\in\{0,1\}$. Using equalities (\ref{defrel}) checking all the $4$ possibilities for the pair $(\varepsilon, \eta)$ it is easy to check that the equality
\begin{equation}\label{ezjob}
a_{\varepsilon}^{a_{\eta}(a_0a_1)^{\varepsilon-\eta}}=a_{\varepsilon}
\end{equation}
always holds. Then we have
\begin{align}
\notag a_j^{-1}a_ia_j&=a_{2s+\eta}^{-1}a_{2r+\varepsilon}a_{2s+\eta}&&\\
\notag &=\left(a_{\eta}^{(a_0a_1)^s}\right)^{-1}\left(a_{\varepsilon}^{(a_0a_1)^r}\right)\left(a_{\eta}^{(a_0a_1)^s}\right)&&\text{using}~(\ref{allel})\\
\notag &=a_{\varepsilon}^{(a_0a_1)^r(a_0a_1)^{-s}a_{\eta}(a_0a_1)^s}&&\\
\notag &=a_{\varepsilon}^{a_{\eta}(a_0a_1)^{2s-r}}&&\text{using}~(\ref{defrel})\\
\notag &=a_{\varepsilon}^{a_{\eta}(a_0a_1)^{\varepsilon-\eta}(a_0a_1)^{2s-r+\eta-\varepsilon}}&&\\
\notag &=a_{\varepsilon}^{(a_0a_1)^{2s-r+\eta-\varepsilon}}&&\text{using}~(\ref{ezjob})\\
\notag &=a_{\varepsilon}^{(a_0a_1)^{2s-r+\eta-\varepsilon({\rm mod}~n)}}&&\text{using}~(\ref{defrel})\\
\notag &=a_{2j-i({\rm mod}~2n)}&&\text{using}~(\ref{allel})
\end{align}
So, we proved that every relation from (\ref{justrel}) follows from relations (\ref{defrel}).
\end{proof}
\begin{cor}\label{dihinj} Let $n$ be a positive integer and $Q={\rm R}_{2n}$ be the dihedral quandle of order $2n$. Then the natural map $Q\to G_Q$ is injective.
\end{cor}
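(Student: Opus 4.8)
The plan is to produce a group $G$ together with an \emph{injective} quandle homomorphism $f\colon {\rm R}_{2n}\to{\rm Conj}(G)$. Once this is in hand, the universal property of the enveloping group recalled above gives a group homomorphism $F\colon G_Q\to G$ with $f=F\circ\theta$, where $\theta\colon Q\to G_Q$ is the natural map; then $\theta(x)=\theta(x')$ forces $f(x)=f(x')$ and hence $x=x'$, so $\theta$ is injective. The most economical choice of $G$ is the dihedral group $D=\langle r,s\mid r^{2n}=s^2=1,\ srs^{-1}=r^{-1}\rangle$ of order $4n$, whose $2n$ reflections $r^{0}s,r^{1}s,\dots,r^{2n-1}s$ are permuted by conjugation and therefore form a subquandle of ${\rm Conj}(D)$.

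I would then define $f(x_i)=r^{i}s$ for $i=0,\dots,2n-1$, reading the index modulo $2n$. The only thing to check is that $f$ respects the operations, which is the short computation
$$f(x_i)*f(x_j)=f(x_j)^{-1}f(x_i)f(x_j)=sr^{-j}\cdot r^{i}s\cdot r^{j}s=sr^{i-j}s\cdot r^{j}s=r^{j-i}r^{j}s=r^{2j-i}s=f(x_{2j-i})=f(x_i*x_j),$$
using $sr^{k}s=r^{-k}$ and $x_i*x_j=x_{2j-i~({\rm mod}~2n)}$. Since $r^{0}s,\dots,r^{2n-1}s$ are pairwise distinct elements of $D$, the map $f$ is injective, and the argument of the first paragraph finishes the proof: $\theta(x_i)=\theta(x_j)$ implies $r^{i}s=r^{j}s$, whence $i\equiv j~({\rm mod}~2n)$ and $x_i=x_j$.

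Alternatively, one can work directly from the presentation of $G_{{\rm R}_{2n}}$ obtained in Proposition~\ref{2krep}: send $a_0\mapsto s$ and $a_1\mapsto rs$, so that $a_0a_1\mapsto srs=r^{-1}$. Each of the four defining relations $[a_0,a_1^2]=[a_0^2,a_1]=[a_0,(a_0a_1)^{n}]=[a_1,(a_0a_1)^{n}]=1$ becomes a trivial relation in $D$ — the first two because $s$ and $rs$ are involutions, the last two because $(a_0a_1)^{n}\mapsto r^{-n}=r^{n}$ is central in $D$ — so the assignment extends to a homomorphism $G_{{\rm R}_{2n}}\to D$. By the formula $a_{2r+\varepsilon}=a_{\varepsilon}^{(a_0a_1)^{r}}$ of Proposition~\ref{2krep}, the image of $a_i$ is $r^{i}s$, and distinctness of these $2n$ elements again yields injectivity of $\theta$. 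Either way there is no genuine obstacle; the only points demanding (minor) care are the bookkeeping of indices modulo $2n$ and the verification that conjugation in $D$ matches the dihedral quandle operation (equivalently, that the four relations of Proposition~\ref{2krep} hold in $D$).
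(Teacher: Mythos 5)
Your proof is correct, and your main argument takes a genuinely different route from the paper. You construct an explicit injective quandle homomorphism $f\colon {\rm R}_{2n}\to{\rm Conj}(D)$ into the dihedral group $D$ of order $4n$ by sending $x_i\mapsto r^is$, check the single conjugation identity, and then invoke the universal property of $\theta\colon Q\to G_Q$ to transfer injectivity of $f$ to injectivity of $\theta$; this bypasses Proposition~\ref{2krep} entirely and is completely self-contained (it amounts to showing ${\rm R}_{2n}$ embeds into a conjugation quandle, which always forces $\theta$ to be injective). The paper instead works inside $G_{{\rm R}_{2n}}$ using the presentation of Proposition~\ref{2krep}: it notes that $H=\langle a_0^2,a_1^2\rangle\cong\mathbb{Z}\times\mathbb{Z}$ is central, that the quotient $G_{{\rm R}_{2n}}/H$ is the dihedral group of order $4n$ (denoted $D_{2n}$ there), and that the images $y_0(y_0y_1)^{2r+\varepsilon}$ of the $a_i$ are pairwise distinct by the normal form in $\langle y_0y_1\rangle\rtimes\langle y_0\rangle$. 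What the paper's route buys is the explicit short exact sequence $1\to\mathbb{Z}\times\mathbb{Z}\to G_{{\rm R}_{2n}}\to D_{2n}\to 1$, which is reused later (in Lemma~\ref{evendih} and in the remark on nilpotency of $G_{{\rm R}_{2n}}$ for $n=2^k$); what your route buys is economy, since no knowledge of a full presentation of $G_{{\rm R}_{2n}}$ is needed. Your ``alternative'' second argument is essentially the paper's proof in disguise: the assignment $a_0\mapsto s$, $a_1\mapsto rs$ is the paper's quotient map $\varphi$ under the identification $y_0=s$, $y_1=rs$, and the distinctness check of the images $r^is$ is the same computation.
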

\begin{proof} From Proposition \ref{2krep} follows that the elements $a_0^2$, $a_1^2$ belong to the center of $G_{{\rm R}_{2n}}$. Therefore $H=\langle a_0^2,a_1^2\rangle=\mathbb{Z}\times\mathbb{Z}$ is a normal subgroup of $G_{{\rm R}_{2n}}$. From relations~(\ref{defrel}) follows that the quotient $G_{{\rm R}_{2n}}/H$ is generated by the elements $y_0=a_0H$, $y_1=a_1H$ and has the defining relations $y_0^2=y_1^2=(y_0y_1)^{2n}=1$.
This group is isomorphic to the dihedral group $D_{2n}$ and we have the short exact sequence
\begin{equation}\label{sesd}1\to \mathbb{Z}\times \mathbb{Z}\to G_{{\rm R}_{2n}}\to D_{2n}\to 1.
\end{equation}
Denote by $\varphi:G_{{\rm R}_{2n}}\to G_{{\rm R}_{2n}}/H=D_{2n}$ the canonical homomorphism. This homomorphism maps $a_0$ to $y_0$, and $a_1$ to $y_1$. From Proposition \ref{2krep} for $\varepsilon\in\{0,1\}$ and $r\in\{0,\dots,n-1\}$ the element $a_{2r+\varepsilon}$ from $G_{{\rm R}_{2n}}$ has the presentation $a_{2r+\varepsilon}=a_{\varepsilon}^{(a_0a_1)^r}$. Therefore
\begin{equation}\label{uniex}\varphi(a_{2r+\varepsilon})=y_{\varepsilon}(y_0y_1)^{2r}=y_{0}(y_0y_1)^{2r+\varepsilon}.
\end{equation}
Since $D_{2n}=\mathbb{Z}_{2n}\rtimes\mathbb{Z}_2=\langle y_0y_1\rangle\rtimes\langle y_0\rangle$, every element from $D_{2n}$ can be uniquely expressed in the form $y_0^{\eta}(y_0y_1)^{s}$ for $\eta\in\{0,1\}$ and $s\in\{0,\dots,2n-1\}$. This fact together with equalities (\ref{uniex}) implies that the elements $\varphi(a_{i})$ are different for different~$i$. Therefore all the elements $a_0,\dots,a_{2n-1}$ are different.
\end{proof}
From the short exact sequence (\ref{sesd}) follows that the quotient of $G_{{\rm R}_{2n}}$ by the central subgroup is isomorphic to $D_{2n}$. If $n=2^k$ for some integer $k$, then $D_{2n}$ is nilpotent, therefore $G_{{\rm R}_{2n}}$ is nilpotent, so, the dihedral quandles of order $2^k$ provide another examples of quandles with nilpotent enveloping group.
\section{$(G,A)$-quandles and their properties}\label{GAprop}
Based on the construction of the free quandle introduced in Section~\ref{secfr} we will introduce the new way of constructing quandles from groups. Let $G$ be a group and $A$ be a subset of $G$. On the set $A\times G$ define the operation
$$
(a, u) * (b, v) = (a, u v^{-1} b v)
$$
for $a, b \in A$, $u, v \in G$. The algebraic system $R(G,A) =(A\times G, *) $ is a rack and for $a, b \in A$, $u, v \in G$ we have
$$(a, u) *^{-1} (b, v) = (a, u v^{-1} b^{-1} v).$$
We call the rack $R(G,A)$ the $(G,A)$-rack. Denote by $Q(G,A)$ the quotient of $R(G,A)$ by the equivalence relation $(a,gh)\sim(a,h)$ for $g\in C_G(a)=\{x\in G~|~xa=ax\}$. The operation in $Q(G,A)$ is given by the formula
$$
[(a, u)] * [(b, v)] = [(a, u v^{-1} b v)],
$$
 where $[(a,u)]$ denotes the equivalence class of $(a,u)$. Is is easy to check that this operation is correct (i.~e. it does not depend on the representative of $[(a,u)]$). Also it is clear that  $Q(G,A)$ is a quandle. We call this quandle the $(G,A)$-quandle. For the sake of simplicity for $a\in A$, $u\in G$ we will write $(a,u)$ instead of $[(a,u)]$, and for $a\in A$ we denote by the same symbol $a$ the element $(a,1)\in Q(G,A)$. If $F(X)$ is the free group freely generated by the set $X$, then $Q(F(X),X)=FQ(X)$.

Joyce \cite[Section 9]{Joy} introduced the notion of the augmented quandle. An augmented quandle $(Q,G)$ consists of a set $Q$, a group $G$ equipped with a right action on the set $Q$, and a function $\varepsilon: Q\to G$ called the
augmentation map which satisfies the following two axioms
\begin{itemize}
\item[(AQ1)] $q\cdot\varepsilon(q)=q$ for all $q\in Q$.
\item[(AQ2)] $\varepsilon(q\cdot x)=x^{-1}\varepsilon(q)x$ for all $q\in Q$, $x\in G$.
\end{itemize}
Given a pair $(Q,G)$ satisfying (AQ1) and (AQ2), one can define on $Q$ the following operations
$$p*q=p\cdot\varepsilon(q),~~~~~~p*^{-1}q=p\cdot \varepsilon(q)^{-1}.$$
The algebraic system $\langle Q,*\rangle$ (together with the action of $G$ and the map $\varepsilon$) is called the augmented quandle. The action of $G$ on $Q$ is by quandle automorphisms, and the augmentation map $\varepsilon: Q\to G$ gives a quandle homomorphism $Q\to {\rm Conj}(G)$.

Let $G$ be a group and $A$ be a nonempty subset of $G$. On the set $A\times G$ define an equivalence relation $(a,g)\sim (a,hg)$ for $h\in C_g(a)$ and denote by $Q=A\times G/\sim$ the set of equivalence classes. The group $G$ acts on $Q$ by the rule $(a,g)\cdot h=(a,gh)$. Define the map $\varepsilon:Q\to G$ by the rule $\varepsilon(a,g)=g^{-1}ag$, this map is correctly defined on the set of equivalence classes. It is easy to check that $\varepsilon$ satisfied (AQ1) and (AQ2) and that the $(G,A)$-quandle is the augmented quandle with acting group $G$. As a corollary we have the following lemma.
\begin{lemma}\label{simple}For an element $g\in G$ the map $\iota_g:Q(G,A)\to Q(G,A)$ which maps $(a,u)$ to $(a,ug)$ is an automorphism of $Q(G,A)$.
\end{lemma}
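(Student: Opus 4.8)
The plan is to recognize $\iota_g$ as right translation by $g$ in the augmented quandle structure on $Q(G,A)$ described just before the statement, and then to invoke the general observation, already recorded above, that the acting group of an augmented quandle operates by quandle automorphisms. Concretely, under the action $(a,u)\cdot h=(a,uh)$ of $G$ on $Q=Q(G,A)$, the map $\iota_g$ is exactly $x\mapsto x\cdot g$; since $G$ acts on $Q$ by quandle automorphisms, $\iota_g$ is a quandle endomorphism, and it is bijective because $\iota_g\circ\iota_{g^{-1}}=\iota_{g^{-1}}\circ\iota_g=\id$ follows from the fact that we have a group action. Hence $\iota_g$ is an automorphism of $Q(G,A)$.

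For completeness one can also check this directly in three short steps. First, $\iota_g$ is well defined on $\sim$-classes: if $(a,u)\sim(a,u')$, say $u=hu'$ with $h\in C_G(a)$, then $ug=h(u'g)$ with the same $h\in C_G(a)$, so $(a,ug)\sim(a,u'g)$. Second, $\iota_g$ is bijective with inverse $\iota_{g^{-1}}$, since $\iota_{g^{-1}}(\iota_g(a,u))=\iota_{g^{-1}}(a,ug)=(a,ugg^{-1})=(a,u)$, and symmetrically on the other side. Third, $\iota_g$ respects the operation: on one hand
\[
\iota_g\big((a,u)*(b,v)\big)=\iota_g(a,uv^{-1}bv)=(a,uv^{-1}bvg),
\]
and on the other hand, using $(vg)^{-1}=g^{-1}v^{-1}$,
\[
\iota_g(a,u)*\iota_g(b,v)=(a,ug)*(b,vg)=(a,ug(vg)^{-1}b(vg))=(a,uv^{-1}bvg),
\]
so the two agree. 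Combining these three points gives the lemma.

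There is essentially no obstacle here: the only point requiring a moment's care is the well-definedness on $\sim$-classes, and it goes through precisely because multiplying on the right by $g$ does not disturb the left factor $h\in C_G(a)$ by which two representatives of a class differ; everything else is a direct consequence of associativity in $G$.
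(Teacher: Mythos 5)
Your proposal is correct and follows essentially the same route as the paper, which deduces the lemma directly from the observation that $Q(G,A)$ is an augmented quandle with acting group $G$, so that $G$ acts by quandle automorphisms and $\iota_g$ is just the action of $g$. Your supplementary direct verification (well-definedness on $\sim$-classes, bijectivity via $\iota_{g^{-1}}$, and compatibility with $*$) is accurate and simply makes explicit what the paper leaves to the reader.
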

The following lemma follows from the definition of the $(G,A)$-quandle.
\begin{lemma}\label{simple2} Let $x,y\in Q(G,A)$ be such that $x*y=x$, then $y*x=y$.
\end{lemma}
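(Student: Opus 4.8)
The plan is to unwind the definition in coordinates. Pick representatives and write $x=(a,u)$, $y=(b,v)$ with $a,b\in A$ and $u,v\in G$. By the formula for the operation in $Q(G,A)$ we have $x*y=(a,uv^{-1}bv)$, so the hypothesis $x*y=x$ holds in $Q(G,A)$ precisely when $(uv^{-1}bv)u^{-1}\in C_G(a)$, i.e.\ when the group element $uv^{-1}bvu^{-1}$ commutes with $a$. Likewise $y*x=(b,vu^{-1}au)$, so the desired conclusion $y*x=y$ is equivalent to $vu^{-1}auv^{-1}\in C_G(b)$, i.e.\ to $vu^{-1}auv^{-1}$ commuting with $b$.

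Now set $g=uv^{-1}$, so that $uv^{-1}bvu^{-1}=gbg^{-1}$ and $vu^{-1}auv^{-1}=g^{-1}ag$. The hypothesis says $a$ commutes with $gbg^{-1}$; conjugating the corresponding identity $a\,(gbg^{-1})=(gbg^{-1})\,a$ by $g$ (that is, multiplying on the left by $g^{-1}$ and on the right by $g$) yields $(g^{-1}ag)\,b=b\,(g^{-1}ag)$, which is exactly the statement that $g^{-1}ag$ commutes with $b$, i.e.\ $y*x=y$. In fact this argument is symmetric, so one even gets the stronger equivalence $x*y=x \iff y*x=y$.

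The only mild subtlety is that $x*y=x$ and $y*x=y$ are assertions about equivalence classes while we have fixed particular representatives; but this causes no problem since, as already observed, the operation on $Q(G,A)$ is well defined on classes. One can also avoid coordinates by using the augmented-quandle description: a one-line computation shows that the stabilizer of $y$ under the right $G$-action on $Q(G,A)$ is exactly $C_G(\varepsilon(y))$, so that $y*x=y\iff \varepsilon(x)\in C_G(\varepsilon(y))\iff [\varepsilon(x),\varepsilon(y)]=1$, and the same computation with the roles of $x$ and $y$ exchanged gives $x*y=x\iff[\varepsilon(y),\varepsilon(x)]=1$; the two commutator conditions coincide. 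There is essentially no obstacle here — the lemma is a purely formal consequence of the definition of the $(G,A)$-quandle.
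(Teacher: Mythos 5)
Your proof is correct, and it is exactly the ``direct calculation'' the paper's one-line proof alludes to: the hypothesis and conclusion translate, for fixed representatives $(a,u)$, $(b,v)$, into the commutation conditions $[a,\,uv^{-1}bvu^{-1}]=1$ and $[vu^{-1}auv^{-1},\,b]=1$, which are equivalent by conjugating with $uv^{-1}$. Nothing further is needed.
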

\begin{proof}
Direct calculations.
\end{proof}
 At first glance it may seem that as for free quandles the quandle $Q(G,A)$ is isomorphic to the subquandle of ${\rm Conj}(G)$ which consists of the union of conjugacy classes of elements from $A$, however, it is not true. For example, if $G=F_2=\langle x,y\rangle$ is a free group and $A=\{x,y,y^{-1}xy\}$, then the quandle $Q(G,A)$ has three orbits under the action of ${\rm Inn}(Q(G,A))$, but the quandle $Q$ which is a subquandle of ${\rm Conj}(G)$ which consists of the union $x^G\cup y^{G}\cup (y^{-1}xy)^G=x^G\cup y^G$ has only two orbits under the action of ${\rm Inn}(Q)$, therefore these quandles are not isomorphic. However, if the elements of $A$ are pairwise non-conjugate, then we have the following proposition.
\begin{prop}Let $G$ be a group and $A$ be a subset of $G$ such that the elements of $A$ are pairwise non-conjugate in $G$. Then  $Q(G,A)$ is isomorphic to the subquandle of ${\rm Conj}(G)$ which consists of the union of conjugacy classes of elements from $A$.
\end{prop}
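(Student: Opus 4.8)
The plan is to exhibit an explicit map between $Q(G,A)$ and the target subquandle $P \subseteq {\rm Conj}(G)$ and check it is a bijective quandle homomorphism. Define $\psi : Q(G,A) \to {\rm Conj}(G)$ by $\psi(a,u) = u^{-1}au$; this is exactly the augmentation map $\varepsilon$ identified just before Lemma~\ref{simple}, so it is well defined on equivalence classes (since $h \in C_G(a)$ gives $(hu)^{-1}a(hu) = u^{-1}h^{-1}ah u = u^{-1}au$) and, by axiom (AQ2) together with the recipe $p*q = p\cdot\varepsilon(q)$, it is automatically a quandle homomorphism: $\psi((a,u)*(b,v)) = \psi(a,uv^{-1}bv) = v^{-1}b^{-1}v\,u^{-1}au\,v^{-1}bv = \psi(b,v)^{-1}\psi(a,u)\psi(b,v) = \psi(a,u)*\psi(b,v)$. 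The image of $\psi$ is clearly $\bigcup_{a\in A} a^G$, which is the subquandle $P$, so $\psi$ is a surjection onto $P$. It remains to prove injectivity.

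For injectivity, suppose $\psi(a,u) = \psi(b,v)$, i.e. $u^{-1}au = v^{-1}bv$ in $G$. Then $a$ and $b$ are conjugate in $G$, so by the hypothesis that the elements of $A$ are pairwise non-conjugate we must have $a = b$. Then $u^{-1}au = v^{-1}av$ gives $vu^{-1} \in C_G(a)$, hence $v = (vu^{-1})u$ with $vu^{-1} \in C_G(a)$, so $(a,u) \sim (a,v)$, i.e. $(a,u)$ and $(b,v)$ represent the same element of $Q(G,A)$. Thus $\psi$ is injective, hence a quandle isomorphism onto $P$.

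**The only subtle point** — and the reason the pairwise-non-conjugacy hypothesis is essential — is precisely the step where conjugacy of $a$ and $b$ in $G$ is used to conclude $a = b$; without it, distinct elements $a,b \in A$ that happen to be conjugate would collapse in ${\rm Conj}(G)$ but not in $Q(G,A)$, exactly as in the counterexample $A = \{x,y,y^{-1}xy\}$ in $F_2$ discussed in the text. Everything else is a routine unwinding of the definitions of $*$, of the equivalence relation defining $Q(G,A)$, and of the centralizer. I would present the argument in this order: (1) define $\psi$ and note it is the augmentation map, so well defined and a homomorphism; (2) observe surjectivity onto $P$ is immediate from the definition of $P$; (3) prove injectivity using the non-conjugacy hypothesis as above; (4) conclude. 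No genuine obstacle is anticipated beyond keeping the bookkeeping of which variable ranges over $A$ and which over $G$ straight.
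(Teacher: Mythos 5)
Your proposal is correct and follows essentially the same route as the paper's own proof: the same map $(a,u)\mapsto u^{-1}au$, surjectivity onto the union of conjugacy classes by definition, injectivity via the pairwise non-conjugacy hypothesis giving $a=b$ and then $vu^{-1}\in C_G(a)$, and the homomorphism property by the same direct computation (your additional remark that the map is the augmentation map is a harmless shortcut the paper does not use).
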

\begin{proof} Denote by $Q$ the subquandle of ${\rm Conj}(G)$ which consists of the union of conjugacy classes of elements from $A$ and consider the map $\varphi:Q(G,A)\to Q$ given by the rule $(a,u)\varphi=u^{-1}au$. If $a$ is an element of $Q$, then there exist elements $b\in A$, $x\in G$ such that $a=x^{-1}bx$ and $(b,x)\varphi=a$, therefore $\varphi$ is surjective. If $u^{-1}au=(a,u)\varphi=(b,v)\varphi=v^{-1}bv$ for $a,b\in A$, $u,v\in G$, then since the elements from $A$ are pairwise non-conjugate, $a=b$ and $vu^{-1}\in C_G(a)$, therefore $(a,u)=(b,v)$ and $\varphi$ is injective. Using direct calculation we have
\begin{align}
\notag ((a,u)*(b,v))\varphi&=(a,uv^{-1}bv)\varphi=(uv^{-1}bv)^{-1}a(uv^{-1}bv)\\
\notag &=(v^{-1}bv)^{-1}(u^{-1}au)(v^{-1}bv)=\big((b,v)\varphi\big)^{-1}\big((a,u)\varphi\big)\big((b,v)\varphi\big),
\end{align}
therefore $\varphi$ is an isomorphism.
\end{proof}

\begin{cor} Let $G$ be a group and $A$ be the set of representatives of conjugacy classes of $G$. Then  $Q(G,A)$ is isomorphic to ${\rm Conj}(G)$.
\end{cor}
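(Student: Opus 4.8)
\emph{Proof proposal.} The plan is to apply the preceding proposition directly, so the entire content of the proof is bookkeeping about what "set of representatives of conjugacy classes" means. First I would observe that if $A$ is a set of representatives of the conjugacy classes of $G$, then by construction any two distinct elements $a,b\in A$ lie in distinct conjugacy classes, hence are not conjugate in $G$. Thus $A$ satisfies the hypothesis of the previous proposition, and we conclude that $Q(G,A)$ is isomorphic (via the map $\varphi:(a,u)\mapsto u^{-1}au$) to the subquandle $Q$ of ${\rm Conj}(G)$ whose underlying set is the union of the conjugacy classes of the elements of $A$.

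The second step is to identify this subquandle with ${\rm Conj}(G)$ itself. Since $A$ is a set of \emph{representatives} of \emph{all} conjugacy classes of $G$, every element of $G$ is conjugate to exactly one element of $A$; in particular the union $\bigcup_{a\in A} a^G$ equals $G$. Hence the underlying set of $Q$ is all of $G$, and since the quandle operation on $Q$ is inherited from ${\rm Conj}(G)$ by restriction, $Q$ is literally ${\rm Conj}(G)$. Combining the two steps gives $Q(G,A)\cong{\rm Conj}(G)$, as claimed.

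There is no real obstacle here: the statement is a genuine corollary rather than a new theorem, and the only thing that requires a moment's thought is checking that "representatives of conjugacy classes" supplies both properties needed — pairwise non-conjugacy (at most one element per class) for the proposition's hypothesis, and surjectivity onto the set of classes (at least one element per class) for the conclusion that the union is all of $G$. Both are immediate from the definition, so the proof is essentially one line once the previous proposition is in hand.
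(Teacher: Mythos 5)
Your proof is correct and follows exactly the route the paper intends: apply the preceding proposition (pairwise non-conjugacy of representatives) and note that the union of the conjugacy classes of a full set of representatives is all of $G$, so the subquandle is ${\rm Conj}(G)$ itself. The paper leaves this as an immediate corollary, and your write-up supplies precisely that argument.
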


A lot of quandles can be constructed as $(G,A)$-quandles.

\begin{theorem}\label{quasiuniv}Let $Q$ be a quandle such that the natural map $\theta:Q\to G_Q$ is injective, and $A$ be a set of representatives of orbits of $Q$. Then $Q$ is isomorphic to $Q(G_Q,\theta(A))$.
\end{theorem}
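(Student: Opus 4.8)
The plan is to exhibit an explicit isomorphism $\psi\colon Q(G_Q,\theta(A))\to Q$ coming from the action of the enveloping group on $Q$. Recall that $G_Q$ acts on $Q$ on the right through the surjection $G_Q\to{\rm Inn}(Q)$ sending a generator $x\in Q$ to $S_x$; write $q\cdot g$ for this action. From the defining relations of $G_Q$ one obtains the two identities
\[
q\cdot\theta(q)=q,\qquad \theta(q\cdot g)=g^{-1}\theta(q)g\qquad(q\in Q,\ g\in G_Q),
\]
i.e.\ $(Q,G_Q)$ with augmentation $\theta$ is exactly the augmented quandle of Joyce \cite{Joy}. Since the $G_Q$-action factors through ${\rm Inn}(Q)$, the $G_Q$-orbits on $Q$ coincide with the orbits ${\rm Orb}(q)$; in particular every $q\in Q$ can be written as $q=a\cdot g$ with $a\in A$ and $g\in G_Q$.

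Now define $\psi$ on representatives by $\psi\bigl((\theta(a),u)\bigr)=a\cdot u$ for $a\in A$, $u\in G_Q$. First I would check that $\psi$ is well defined. The only type of relation collapsing the class of $(\theta(a),u)$ in $Q(G_Q,\theta(A))$ is $(\theta(a),cu)\sim(\theta(a),u)$ with $c\in C_{G_Q}(\theta(a))$, so it suffices to show $a\cdot c=a$ for such $c$. From the second identity above, $\theta(a\cdot c)=c^{-1}\theta(a)c=\theta(a)$, and now \textbf{injectivity of $\theta$ forces $a\cdot c=a$}. This is the only place the hypothesis on $\theta$ enters, and it is the heart of the argument.

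Next I would verify that $\psi$ is a quandle homomorphism: using $p*q=p\cdot\theta(q)$ together with $\theta(b\cdot v)=v^{-1}\theta(b)v$,
\[
\psi(\theta(a),u)*\psi(\theta(b),v)=(a\cdot u)\cdot\theta(b\cdot v)=a\cdot\bigl(uv^{-1}\theta(b)v\bigr)=\psi\bigl((\theta(a),u)*(\theta(b),v)\bigr).
\]
Surjectivity is immediate from the remark above: $q=a\cdot g=\psi(\theta(a),g)$. For injectivity, suppose $a\cdot u=b\cdot v$ with $a,b\in A$; since $a\cdot u\in{\rm Orb}(a)$ and $b\cdot v\in{\rm Orb}(b)$, these orbits coincide, so $a=b$ because $A$ meets each orbit exactly once. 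Then $a\cdot(uv^{-1})=a$, hence $\theta(a)=\theta(a\cdot(uv^{-1}))=(uv^{-1})^{-1}\theta(a)(uv^{-1})$, i.e.\ $uv^{-1}\in C_{G_Q}(\theta(a))$, so $(\theta(a),u)\sim(\theta(a),v)$ and $\psi$ is injective. Thus $\psi$ is an isomorphism.

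In summary, the main (and essentially only) obstacle is the well-definedness of $\psi$: one needs the stabilizer in $G_Q$ of an orbit representative $a$ to coincide with $C_{G_Q}(\theta(a))$, and the inclusion $C_{G_Q}(\theta(a))\subseteq{\rm Stab}_{G_Q}(a)$ is precisely what fails without injectivity of $\theta$, whereas the reverse inclusion is automatic from $\theta(a\cdot g)=g^{-1}\theta(a)g$. Everything else is a routine unwinding of the augmented quandle structure, of the definition of $Q(G_Q,\theta(A))$, and of the orbit decomposition of $Q$.
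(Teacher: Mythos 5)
Your proof is correct, but it runs in the opposite direction from the paper's and is organized around a different structural observation. The paper constructs $\psi\colon Q\to Q(G_Q,\theta(A))$ by choosing for each $a\in Q$ an expression $a=x*^{\varepsilon_1}x_1*^{\varepsilon_2}\dots*^{\varepsilon_n}x_n$ with $x\in A$ and sending it to $(x,x_1^{\varepsilon_1}\dots x_n^{\varepsilon_n})$; it then verifies well-definedness, bijectivity and the homomorphism property by explicit word manipulations (including a derived identity of the form $t*^{-1}z*y*z=t*(y*z)$), and it uses injectivity of $\theta$ only in the injectivity step --- which is why the same argument also yields Proposition~\ref{almostquasiuniv} (a surjection $Q\to Q(G_Q,\theta(A))$ with no hypothesis on $\theta$) as a by-product. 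You instead define the inverse map $Q(G_Q,\theta(A))\to Q$ via the right $G_Q$-action on $Q$ coming from $G_Q\twoheadrightarrow{\rm Inn}(Q)$, i.e.\ you exploit the augmented-quandle structure $(Q,G_Q,\theta)$ that the paper itself recalls in Section~\ref{GAprop} but does not use in this proof. This buys you shorter verifications (the homomorphism property is a one-line computation with $p*q=p\cdot\theta(q)$ and $\theta(q\cdot g)=g^{-1}\theta(q)g$, with no normal forms or word identities needed) and it isolates exactly where injectivity of $\theta$ matters: it forces the stabilizer of an orbit representative $a$ to contain $C_{G_Q}(\theta(a))$, which is the well-definedness of your map; this localization also explains cleanly why quandles like $U(1,n)$ with non-injective $\theta$ can fail to be $(G,A)$-quandles. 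The price is that your direction does not directly give the unconditional surjection of Proposition~\ref{almostquasiuniv}, and a fully detailed write-up should include the routine induction establishing $\theta(q\cdot g)=g^{-1}\theta(q)g$ for arbitrary $g\in G_Q$ and the standard fact that $x\mapsto S_x$ extends to a homomorphism $G_Q\to{\rm Inn}(Q)$ (which the paper also takes for granted). Apart from these routine points, your argument is complete.
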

\begin{proof} Since $\theta$ is injective, we will identify elements from $Q$ and their images in $G_Q$.

Since $A$ is a maximal subset of $Q$ such that every two elements from $A$ belong to different orbits, for every element $a\in Q$, there exists an element $x\in A$ and an inner automorphism $\varphi\in {\rm Inn}(Q)$  such that $a=x\varphi$. Since $\varphi$ is an inner automorphism of $Q$, it can be written in the form $\varphi=S_{x_1}^{\varepsilon_1}S_{x_2}^{\varepsilon_2}\dots S_{x_n}^{\varepsilon_n}$ for $x_1,\dots,x_n\in Q$, $\varepsilon_1,\dots,\varepsilon_n\in\{\pm1\}$, therefore every element $a$ from $Q$ can be written in the form
\begin{align}\label{canon}a=x*^{\varepsilon_1}x_1*^{\varepsilon_2}\dots *^{\varepsilon_n}x_n
\end{align}
for $x\in A$, $x_1,\dots,x_n\in Q$, $\varepsilon_1, \varepsilon_2,\dots,\varepsilon_n\in\{\pm1\}$. Let $a$ be an element from $Q$ written as in (\ref{canon}) and let $\psi:Q\to Q(G_Q,\theta(A))$ be the map defined by
$$\psi:a\mapsto (x,x_1^{\varepsilon_1}\dots x_n^{\varepsilon_n}).$$
We are going to prove that $\psi$ gives an isomorphism $Q\to Q(G_Q,\theta(A))$. At first, we need to prove that $\psi$ is well defined, i.~e. if $a=b$, then $\psi(a)=\psi(b)$. Let
\begin{align}
\notag a&=x*^{\varepsilon_1}x_1*^{\varepsilon_2}\dots *^{\varepsilon_n}x_n,\\
\label{korre} b&=y*^{\eta_1}y_1*^{\eta_2}\dots *^{\eta_k}y_k
\end{align}
for $x,y\in A$, $x_1,\dots,x_n,y_1\dots,y_k\in Q$, $\varepsilon_1,\dots,\varepsilon_n,\eta_1,\dots,\eta_k\in\{\pm 1\}$ and $a=b$. Since all the elements from $A$ belong to different orbits, we have $x=y$ and equality (\ref{korre}) implies that
$$
x*^{\varepsilon_1}x_1*^{\varepsilon_2}\dots *^{\varepsilon_n}x_n*^{-\eta_k}y_k*^{-\eta_{k-1}}\dots*^{-\eta_1}y_1=x.
$$
Acting  by $\theta$ we conclude that the element $d=x_1^{\varepsilon_1}x_2^{\varepsilon_2}\dots x_n^{\varepsilon_n}y_k^{-\eta_k}\dots y_1^{-\eta_1}$ belongs to $C_{G_Q}(x)$, therefore
$$\psi(b)=(y,y_1^{\eta_1}\dots x_k^{\eta_k})=(x,dy_1^{\eta_1}\dots x_k^{\eta_k})=(x,x_1^{\varepsilon_1}\dots x_n^{\varepsilon_n})=\psi(a)$$
and the map $\psi$ is well defined.

Now we need to prove that $\psi$ is bijective. If $a=(x,w)$ for $x\in \theta(A)$, $w\in G_Q$, then since $Q$ generates $G_Q$, we have $w=x_1^{\varepsilon_1}\dots x_n^{\varepsilon_n}$ for some elements $x_1,\dots,x_n\in Q$, and numbers $\varepsilon_1,\dots,\varepsilon_n\in\{\pm1\}$ and $\psi(x*^{\varepsilon_1}x_1*^{\varepsilon_2}\dots *^{\varepsilon_n}x_n)=a$, therefore $\psi$ is surjective. For injectivity, let
\begin{align}
\notag a&=x*^{\varepsilon_1}x_1*^{\varepsilon_2}\dots *^{\varepsilon_n}x_n,\\
\notag b&=y*^{\eta_1}y_1*^{\eta_2}\dots *^{\eta_k}y_k
\end{align}
for $x,y\in A$, $x_1,\dots,x_n,y_1\dots,y_k\in Q$, $\varepsilon_1,\dots,\varepsilon_n,\eta_1,\dots,\eta_k\in\{\pm 1\}$ and
\begin{equation}\label{1inject}(x,x_1^{\varepsilon_1}\dots x_n^{\varepsilon_n})=\psi(a)=\psi(b)=(y,y_1^{\eta_2}\dots y_k^{\eta_k}).
\end{equation}
From the construction of $Q(G,A)$ follows that $x=y$ and from equality (\ref{1inject}) follows that the element $d=x_1^{\varepsilon_1}\dots x_n^{\varepsilon_n}y_k^{-\eta_k}\dots y_1^{-\eta_1}$ belongs to $C_{G_Q}(x)$. Consider the element
$$c=x*^{\varepsilon_1}x_1*^{\varepsilon_2}\dots *^{\varepsilon_n}x_n*^{-\eta_k}y_k*^{\eta_{k-1}}\dots *^{\eta_1}y_1.$$
Acting on $c$ by $\theta$ we have $\theta(c)=d^{-1}\theta(x)d=\theta(x)$, and since $\theta$ is injective, $c=x=y$. Therefore
$$b=c*^{\eta_1}y_1*^{\eta_2}\dots *^{\eta_k}y_k=x*^{\varepsilon_1}x_1*^{\varepsilon_2}\dots *^{\varepsilon_n}x_n=a$$
and $\psi$ is injective.

Now we need to prove that $\psi$ is a homomorphism. The quandle axiom (r2) says that the equality
$$(x*y)*z=(x*z)*(y*z)$$
holds for arbitrary elements $x,y,z\in Q$. Denoting by $t=x*z$ we conclude that the equality
\begin{equation}\label{hope}
t*^{-1}z*y*z=t*(y*z)
\end{equation}
holds for all $t,y,z\in Q$. Let
\begin{align}
\notag a&=x*^{\varepsilon_1}x_1*^{\varepsilon_2}\dots *^{\varepsilon_n}x_n,\\
\notag b&=y*^{\eta_1}y_1*^{\eta_2}\dots *^{\eta_k}y_k
\end{align}
be two elements from $Q$. For these elements we have $\psi(a)=(x,x_1^{\varepsilon_1}\dots x_n^{\varepsilon_n})$, $\psi(b)=(y,y_1^{\eta_1}\dots y_k^{\eta_k})$ and
\begin{equation}\label{oneside}\psi(a)*\psi(b)=(x, x_1^{\varepsilon_1}\dots x_n^{\varepsilon_n}y_k^{-\eta_k}\dots y_1^{-\eta_2}yy_1^{\eta_1}\dots y_{k-1}^{\eta_k})
\end{equation}
 Applying several times equality (\ref{hope}) we have
\begin{align}\notag a*b&=a*(y*^{\eta_1}y_1*^{\eta_2}\dots *^{\eta_{k-1}}y_{k-1})\\
\notag &=((a*^{-\eta_k}y_k)*(y*^{\eta_1}y_1*^{\eta_2}\dots *^{\eta_{k-1}}y_{k-1}))*^{\eta_k} y_k\\
\notag &\dots \\
\notag &=a*^{-\eta_k}y_k*^{-\eta_{k-1}}\dots *^{-\eta_1}y_1*y*^{\eta_1}y_1*^{\eta_2}\dots*^{\eta_k}y_k.
\end{align}
Applying $\theta$ to this equality and comparing the result with equality (\ref{oneside}) we conclude that $\psi$ is a homomorphism, and therefore $\psi$ is an isomorphism $Q\to Q(G_Q,\theta(A))$.
\end{proof}

Note that the fact that $\theta:Q\to G_Q$ is injective in Theorem \ref{quasiuniv} was used only for proving the injectivity of the map $\psi$. If we do not suppose that the map $Q\to G_Q$ is injective, then the proof of Theorem \ref{quasiuniv} guarantees the following result.
\begin{prop}\label{almostquasiuniv}
Let $Q$ be a quandle  and $A$ be a set of representatives of orbits of $Q$. Then there exists a surjective homomorphism $Q\to Q(G_Q,\theta(A))$.
\end{prop}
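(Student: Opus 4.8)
The plan is to recycle the map $\psi$ built in the proof of Theorem~\ref{quasiuniv} and to observe that all of its properties except injectivity are established without ever using that $\theta$ is injective. Concretely, for $a\in Q$ pick a representation $a=x*^{\varepsilon_1}x_1*^{\varepsilon_2}\dots*^{\varepsilon_n}x_n$ with $x\in A$ and $x_1,\dots,x_n\in Q$, $\varepsilon_i\in\{\pm1\}$ --- such a representation exists because $A$ meets every orbit of $Q$ and every element of ${\rm Inn}(Q)$ is a word in the $S_{x_i}^{\pm1}$ --- and set $\psi(a)=\bigl(\theta(x),\theta(x_1)^{\varepsilon_1}\cdots\theta(x_n)^{\varepsilon_n}\bigr)\in Q(G_Q,\theta(A))$.

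First I would check that $\psi$ is well defined. Suppose $a=b$, with $b=y*^{\eta_1}y_1*^{\eta_2}\dots*^{\eta_k}y_k$, $y\in A$. Then $a$ and $b$ lie in the same orbit, so $x$ and $y$ are the same chosen representative of that orbit, hence $x=y$; here one uses only that $A$ is a transversal for the orbit action, and nothing about $\theta$. Using axiom (r1), the equality $a=b$ can be rewritten as $x*^{\varepsilon_1}x_1*^{\varepsilon_2}\dots*^{\varepsilon_n}x_n*^{-\eta_k}y_k*^{-\eta_{k-1}}\dots*^{-\eta_1}y_1=x$, and applying the quandle homomorphism $\theta:Q\to{\rm Conj}(G_Q)$ shows that $d=\theta(x_1)^{\varepsilon_1}\cdots\theta(x_n)^{\varepsilon_n}\theta(y_k)^{-\eta_k}\cdots\theta(y_1)^{-\eta_1}$ centralizes $\theta(x)$ in $G_Q$; since $d\cdot\theta(y_1)^{\eta_1}\cdots\theta(y_k)^{\eta_k}=\theta(x_1)^{\varepsilon_1}\cdots\theta(x_n)^{\varepsilon_n}$, the defining relation of $Q(G_Q,\theta(A))$ gives $\psi(a)=\psi(b)$. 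Again, only the fact that $\theta$ is a homomorphism into a conjugation quandle is used.

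Then I would transcribe, essentially verbatim from the proof of Theorem~\ref{quasiuniv}, the two remaining verifications. Surjectivity: given $(\theta(x),w)$ in $Q(G_Q,\theta(A))$, write $w=\theta(x_1)^{\varepsilon_1}\cdots\theta(x_n)^{\varepsilon_n}$, which is possible since $Q$ generates $G_Q$; then $\psi(x*^{\varepsilon_1}x_1*^{\varepsilon_2}\dots*^{\varepsilon_n}x_n)=(\theta(x),w)$. Homomorphism property: applying axiom (r2) repeatedly turns $a*b$ into $a*^{-\eta_k}y_k*^{-\eta_{k-1}}\dots*^{-\eta_1}y_1*y*^{\eta_1}y_1*^{\eta_2}\dots*^{\eta_k}y_k$; inserting the chosen representation of $a$ makes this a word with leading letter $x\in A$, and comparing its $\psi$-value, computed in $G_Q$ via $\theta$, with the formula for $\psi(a)*\psi(b)$ in $Q(G_Q,\theta(A))$ yields $\psi(a*b)=\psi(a)*\psi(b)$. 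Neither computation uses injectivity of $\theta$.

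Since well-definedness, surjectivity and the homomorphism property of $\psi$ all go through without the injectivity hypothesis --- which, as noted just above, was needed only to conclude that $\psi$ is injective --- the map $\psi$ is the required surjective quandle homomorphism $Q\to Q(G_Q,\theta(A))$. There is no genuine obstacle here; the only point to watch is the step $x=y$ in the well-definedness argument, where one must make sure this really is forced by $A$ being a set of orbit representatives and does not quietly rely on $\theta$ separating the points of $Q$.
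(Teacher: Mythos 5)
Your proposal is correct and is essentially the paper's own argument: the paper proves this proposition precisely by remarking that injectivity of $\theta$ was used in the proof of Theorem~\ref{quasiuniv} only to establish injectivity of $\psi$, so the same map $\psi$ gives the desired surjective homomorphism. Your explicit rechecking of well-definedness (including the point that $x=y$ follows from $A$ being a transversal of the orbits, not from $\theta$), surjectivity and the homomorphism property matches that reasoning.
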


By Theorem \ref{quasiuniv}, if $Q$ is a quandle such that the natural map $Q\to G_Q$ is injective, then $Q=Q(G,A)$ for appropriate group $G$ and its subset $A$. Of course, the group $G$ and the set $A$ are not uniquely defined by $Q$. For example,
if $H$ is an arbitrary group, then $Q(G,A)=Q(G\times H,A)$. Moreover, if $G$ is an abelian group and $A$ is a subset of $G$ such that $|A|=n$, then independently of $A$ we have $Q(G,A)=T_n$.

Unfortunately, if the natural map $Q\to G_Q$ is not injective, then the quandle $Q$ is not necessarily a $(G,A)$-quandle. The quandles $U(1,n)$ constructed in Section~\ref{exenvexen} give the following statement.
\begin{prop} If $n\geq 2$, then $U(1,n)$ is not a $(G,A)$-quandle.
\end{prop}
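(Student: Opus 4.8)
The plan is to read the operation of $U(1,n)$ off the formulas (\ref{envel}) and then invoke Lemma~\ref{simple2}, which is precisely the property that breaks. Suppose for contradiction that $U(1,n)\cong Q(G,A)$ for some group $G$ and some $A\subseteq G$. The conclusion of Lemma~\ref{simple2} — ``$x*y=x$ implies $y*x=y$'' — is preserved by quandle isomorphisms, so it would have to hold in $U(1,n)$ as well. But in $U(1,n)$ the single element $x_0$ of the $T_1$-part is a fixed point of the whole quandle: by (\ref{envel}), with the first parameter equal to $1$, we get $x_0*z=x_0$ for every $z$, in particular $x_0*y_0=x_0$. On the other hand $y_0*x_0=y_1$ by (\ref{envel}), and $y_1\neq y_0$ precisely because $n\geq 2$. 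This contradicts Lemma~\ref{simple2}, so no such $G$ and $A$ exist.

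So the proof is essentially one line; the only thing to verify is the routine check that the hypothesis of Lemma~\ref{simple2} holds while its conclusion fails in $U(1,n)$, and I anticipate no real obstacle there. Conceptually, in a $(G,A)$-quandle a fixed point can never move another element, whereas in $U(1,n)$ the fixed point $x_0$ has $S_{x_0}$ acting on the $T_n$-part as an $n$-cycle, hence nontrivially once $n\geq 2$. This is the ``negative'' counterpart of Theorem~\ref{quasiuniv}: it shows that injectivity of $\theta:Q\to G_Q$ cannot be dropped there, and indeed $U(1,n)$ fails it, since $G_{U(1,n)}=\Z^2$ forces $\theta(y_0)=\dots=\theta(y_{n-1})$.

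Should Lemma~\ref{simple2} be unavailable, I would instead analyse orbits. Composing the assumed isomorphism with a translation $\iota_g$ (Lemma~\ref{simple}), one may take the fixed point to be $(a,1)$ with $a\in A$; then $(a,1)*(b,v)=(a,1)$ for all $b\in A$, $v\in G$ puts the normal closure $N$ of $A$ in $G$ inside $C_G(a)$, so (as $N$ is normal in $G$) every orbit over $a$ in $Q(G,A)$ is a singleton. Hence the $T_n$-orbit lies over another element $a'\in A$; counting orbits gives $A=\{a,a'\}$ and $C_G(a)=G$, i.e. $a\in Z(G)$. Triviality of the operation on the orbit over $a'$ forces the normal closure of $a'$ inside $C_G(a')$, and that orbit being the whole fibre over $a'$ forces $G=C_G(a')\langle a\rangle$; with $a$ central this yields $a'\in Z(G)$, so the fibre over $a'$ is a single point, contradicting $n\geq 2$. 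This route is strictly longer, so it is only a backup.
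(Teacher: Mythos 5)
Your proof is correct and is essentially the same as the paper's: both verify that in $U(1,n)$ one has $x*y_0=x$ while $y_0*x=y_1\neq y_0$ for $n\geq 2$, contradicting Lemma~\ref{simple2}. The extra remarks and the backup orbit argument are fine but not needed.
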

\begin{proof} The quandle $U(1,n)$ has the elements $x,y_0,\dots,y_{n-1}$ with the operation
\begin{align}
\notag x*x=x*y_i=x,&& y_i*y_j=y_i,&& y_i*x=y_{i+1~({\rm mod}~n)}
\end{align}
for $i=0,\dots,n-1$. Since $x*y_0=x$ and $y_0*x=y_1\neq y_0$, from Lemma \ref{simple2} follows that  $U(1,n)$ is not a $(G,A)$-quandle.
\end{proof}
\noindent By Theorem \ref{abenvel} for $n\geq 2$ the natural map $U(1,n)\to G_{U(1,n)}$ is not injective.
\section{Commutative, latin and simple quandles are $(G,A)$-quandles}\label{latinica}
A quandle $Q$ is called commutative if $x * y = y * x$ for all $x, y \in Q$. Such quandles were studied, for example, in \cite{BaeCho}. Note that Neumann \cite{Neu} used the term ``commutative quandle'' to denote the quandle with abelian group of inner automorphisms.

 Let $Q$ be a commutative quandle. Suppose that for $x,y\in Q$ there exists $z\in Q$ such that $z*x=z*y$. Then from commutativity of $Q$ follows that $x*z=y*z$ and $x=y$. Therefore if $x\neq y$, then $S_x\neq S_y$, i.~e. the map $Q\to {\rm Inn}(Q)$ which maps an element $x$ to the automorphism $S_x$ is injective. Since ${\rm Inn}(Q)$ is the quotient of $G_Q$, the natural map $Q\to G_Q$ is also injective.

A quandle $Q$ is called latin if for every pair of elements $x,y\in Q$ there exists a unique element $z\in Q$ such that $x=y*z$. Such quandles were studied, for example, in \cite{Bon, NelTam,  Sta}. If $Q$ is a finite latin quandle then the multiplication table of $Q$ is a latin square.

Let $Q$ be a latin quandle. Suppose that for $x,y\in Q$ there exists $z\in Q$ such that $z*x=z*y$. Denote by $t=z*x$. Since $Q$ is latin, for the elements $z, t$ there exists a unique element $f$ such that $z*f=t$. The uniqueness of $f$ implies that $x=f=y$. Therefore if $x\neq y$, then $S_x\neq S_y$, i.~e. the map $Q\to {\rm Inn}(Q)$ which maps an element $x$ to the automorphism $S_x$ is injective. Since ${\rm Inn}(Q)$ is the quotient of $G_Q$, the natural map $Q\to G_Q$ is also injective.

A quandle $Q$ is called simple if every homomorphism $Q\to P$ is either injective or constant. Simple quandles were studied, for example, in \cite{Joy-1}. Every simple quandle is connected.

Let $Q$ be a simple quandle. The natural map $\theta:Q\to G_Q$ induces the homomorphism  $Q\to{\rm Conj}(G_Q)$ (which we denote by the same letter $\theta$). Suppose that this homomorphism is not injective. Then, since $Q$ is simple, $\theta$ is constant, i.~e. $\theta(x)=\theta(y)$ for all $x\in Q$ and fixed $y\in Q$. Since ${\rm Inn}(Q)$ is the quotient of $G_Q$, the map $x\mapsto S_x$ is also constant, i.~e. $S_x=S_y$ for all $x\in Q$. Therefore, for arbitrary $x,z\in Q$ we have the equality
$$x*z=S_z(x)=S_y(x)=S_x(x)=x,$$
i.~e. $Q$ is the trivial quandle. But for the trivial quandle the natural homomorphism $\theta:Q\to{\rm Conj}(G_Q)$ is injective and we have a contradiction. So, if $Q$ is a simple quandle, then the natural map $Q\to G_Q$ is always injective.

Summarizing all the above information together Theorem~\ref{quasiuniv} implies the following result.
\begin{cor}\label{simlatcom}If $Q$ is commutative, latin or simple quandle, then $Q$ is a $(G,A)$-quandle.
\end{cor}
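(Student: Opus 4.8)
The plan is to combine Theorem~\ref{quasiuniv} with the single observation that, in each of the three cases, the natural map $\theta\colon Q\to G_Q$ is injective. Recall that Theorem~\ref{quasiuniv} asserts that whenever $\theta$ is injective, $Q$ is isomorphic to $Q(G_Q,\theta(A))$ for $A$ a set of representatives of the orbits of $Q$; thus $Q$ is automatically a $(G,A)$-quandle (with $G=G_Q$) once injectivity of $\theta$ is known. So the whole proof reduces to checking injectivity of $\theta$ for commutative, latin, and simple quandles, and these checks are exactly the elementary arguments carried out in the discussion preceding the statement; the proof just assembles them.

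For the commutative and latin cases I would first show that the map $x\mapsto S_x$ from $Q$ into $\mathrm{Inn}(Q)$ is injective. If $S_x=S_y$, pick $z$ with $z*x=z*y$; in the commutative case this gives $x*z=y*z$, and then axiom (r1) yields $x=y$, while in the latin case the uniqueness clause (the unique solution of $z*(-)=z*x$) forces $x=y$. Since $\mathrm{Inn}(Q)$ is a quotient of $G_Q$, injectivity of $x\mapsto S_x$ immediately implies injectivity of $\theta$.

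For the simple case I would instead argue by contradiction. The composite quandle homomorphism $\theta\colon Q\to\mathrm{Conj}(G_Q)$ is, by simplicity, either injective or constant; if it is constant, then the induced map $x\mapsto S_x$ into $\mathrm{Inn}(Q)$ is also constant, whence $x*z=S_z(x)=S_x(x)=x$ for all $x,z$, so $Q$ is the trivial quandle. But for the trivial quandle $\theta$ is patently injective (its image is a set of freely commuting distinct generators of the free abelian group $G_Q$), contradicting constancy. Hence $\theta$ is injective. In all three cases Theorem~\ref{quasiuniv} then gives $Q\cong Q(G_Q,\theta(A))$, as desired.

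The one step requiring any genuine thought is the simple case: one has to notice that a constant augmentation map collapses $Q$ to a trivial quandle, and then invoke the fact that trivial quandles inject into their enveloping groups; the commutative and latin cases are routine. Consequently the "obstacle" in this corollary is already dissolved — the real content lies in Theorem~\ref{quasiuniv}, which we are entitled to use as a black box.
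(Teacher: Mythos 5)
Your proposal is correct and follows essentially the same route as the paper: the paper's proof of the corollary is just the remark that $\theta\colon Q\to G_Q$ is injective in each case, with the injectivity checks (via $x\mapsto S_x$ for commutative and latin quandles, and the injective-or-constant dichotomy plus triviality for simple quandles) carried out exactly as you do in the discussion preceding the statement, after which Theorem~\ref{quasiuniv} is invoked as you invoke it.
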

\begin{proof}
In these cases the map $Q\to G_Q$ is injective.
\end{proof}

\section{Takasaki quandles are $(G,A)$-quandles}\label{taktaktak}
Recall that the Takasaki quandle $T(H)$ of an abelian group $H$ is the quandle $\langle H,*\rangle$ with the operation given by $x*y=2y-x$ for $x,y\in H$. In particular, if $H$ is the cyclic group of order $n$, then the Takasaki quandle $T(H)$ is called the dihedral quandle of order $n$ and is denoted by ${\rm R}_n$. In this section we prove that if $H$ is a finitely generated abelian group, then the Takasaki quandle $T(H)$ is the $(G,A)$-quandle. Moreover, we prove that if $H$ is finite, the $G$ can be chosen finite.
\begin{lemma}\label{fram} Let $H$ be a free abelian group. Then there exists a group $G$ and a subset $A$ of $G$ with $|A|=|H:2H|$ such that the Takasaki quandle $Q=T(H)$ is the $(G,A)$-quandle.
\end{lemma}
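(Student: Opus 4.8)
The plan is to realise $T(H)$ concretely as a union of conjugacy classes in the \emph{generalised dihedral group} of $H$, and then to quote the description of $Q(G,A)$ for subsets $A$ whose elements are pairwise non-conjugate. Writing $H$ additively, let $G=H\rtimes\mathbb{Z}_2$ be the group with underlying set $\{(h,\varepsilon)\mid h\in H,\ \varepsilon\in\mathbb{Z}_2\}$ and product $(h,\varepsilon)(h',\varepsilon')=(h+(-1)^{\varepsilon}h',\,\varepsilon+\varepsilon')$. Choose a complete set $R$ of coset representatives for $2H$ in $H$, so $|R|=|H:2H|$, and set $A=\{(a,1)\mid a\in R\}\subseteq G$, so that $|A|=|H:2H|$ as required.

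First I would record the conjugations in $G$ that are needed. A direct computation gives, for $h,h',h''\in H$,
\[
(h'',0)^{-1}(h,1)(h'',0)=(h-2h'',1),\qquad (h',1)^{-1}(h,1)(h',1)=(2h'-h,1).
\]
Since $-h\equiv h\pmod{2H}$, these formulas show that the $G$-conjugacy class of $(h,1)$ is precisely $(h+2H)\times\{1\}$. Because distinct elements of $R$ represent distinct cosets of $2H$, the elements of $A$ are pairwise non-conjugate in $G$, and $\bigcup_{a\in R}(a,1)^{G}=H\times\{1\}$.

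Next I would invoke the proposition describing $Q(G,A)$ when the elements of $A$ are pairwise non-conjugate: it gives an isomorphism from $Q(G,A)$ onto the subquandle $P$ of ${\rm Conj}(G)$ that is the union of the conjugacy classes of the elements of $A$. By the previous step $P=H\times\{1\}$ with operation $(h,1)*(h',1)=(2h'-h,1)$, and the bijection $H\to P$, $h\mapsto(h,1)$, transports the Takasaki operation $x*y=2y-x$ exactly to the operation of $P$. Hence $T(H)\cong P\cong Q(G,A)$, which is the assertion of the lemma.

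Nothing in this argument is deep; the only point requiring care is the conjugation computation in the semidirect product, together with the observation $-h\equiv h\pmod{2H}$, which is what forces the conjugacy class of $(h,1)$ to be the single coset $(h+2H)\times\{1\}$ rather than a union of two cosets. I note two further things. The same group $G=H\rtimes\mathbb{Z}_2$ is finite whenever $H$ is, which is what will be wanted for the finite case. And one could instead argue via Theorem~\ref{quasiuniv}: the map $h\mapsto(h,1)$ is an injective quandle homomorphism $T(H)\to{\rm Conj}(G)$, so by the universal property of the enveloping group the natural map $\theta\colon T(H)\to G_{T(H)}$ is injective; since the inner automorphisms $S_yS_z$ act as translation by $2(y-z)$, the orbits of $T(H)$ are exactly the cosets of $2H$, so $T(H)$ has $|H:2H|$ orbits, and Theorem~\ref{quasiuniv} then yields $T(H)\cong Q(G_{T(H)},\theta(A))$.
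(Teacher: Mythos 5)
Your proof is correct, but it follows a genuinely different route from the paper. The paper's proof of Lemma~\ref{fram} stays inside the general machinery: it shows the natural map $\theta\colon T(H)\to G_{T(H)}$ is injective by noting that $\theta(x)=\theta(y)$ forces $2x-z=z*x=z*y=2y-z$ for all $z$, hence $x=y$ because a free abelian group has no $2$-torsion, and then it applies Theorem~\ref{quasiuniv} together with the observation that the orbits of $T(H)$ are exactly the cosets of $2H$, so the orbit-representative set has cardinality $|H:2H|$; the group produced is the enveloping group $G_{T(H)}$ itself. You instead build an explicit small group, the generalised dihedral group $G=H\rtimes\mathbb{Z}_2$, verify by direct computation that the conjugacy class of $(h,1)$ is $(h+2H)\times\{1\}$, and then invoke the proposition of Section~\ref{GAprop} on subsets $A$ whose elements are pairwise non-conjugate to identify $Q(G,A)$ with the union $H\times\{1\}$ of these classes, which is visibly $T(H)$. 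Your argument never uses freeness of $H$, so it proves the statement for an arbitrary abelian group and yields a finite $G$ whenever $H$ is finite; in effect it gives Theorem~\ref{fintak} in one stroke, bypassing the case analysis of Lemmas~\ref{odddih} and~\ref{evendih} and the direct-product reduction of Proposition~\ref{dirQQ}. What the paper's route buys is the canonical realisation $T(H)\cong Q(G_{T(H)},\theta(A))$ over the enveloping group, in line with the general Theorem~\ref{quasiuniv} (and the freeness hypothesis enters precisely in the injectivity step); your closing remark, deducing injectivity of $\theta$ from the embedding $h\mapsto(h,1)$ and then quoting Theorem~\ref{quasiuniv}, recovers essentially the paper's argument as a corollary of your construction.
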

\begin{proof}Let us prove that the natural map $\theta: Q\to G_Q$ is injective. Suppose that $\theta(x)=\theta(y)$ for
some  elements $x, y$ from $Q$. Since ${\rm Inn}(Q)$ is the quotient of $G_Q$, for all $z\in Q$ we have $2x-z=z*x=z*y=2y-z$, therefore $x=y$ and $\theta$ is injective. By Theorem~\ref{quasiuniv} we have $Q=Q(G_Q,A)$, where $A$ is the set of representatives of orbits. It is easy to see that two elements $x,y\in Q$ belong to the same orbit if and only if $x-y\in 2H$, therefore $A=|H:2H|$.
\end{proof}
\begin{lemma}\label{odddih}Let $H$ be an abelian group of odd order. Then the Takasaki quandle $Q=T(H)$ is the $(G,\{x\})$-quandle for a finite group $G$ and an element $x\in G$.
\end{lemma}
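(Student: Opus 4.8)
The plan is as follows. First I would record the consequence of the odd-order hypothesis: since $|H|$ is odd, multiplication by $2$ is an automorphism of $H$, hence $2H=H$. For a fixed $z\in Q=T(H)$ the map $y\mapsto z*y=2y-z$ is then a bijection of $H$, so $Q$ is connected and any set $A$ of representatives of the orbits of $Q$ reduces to a single element. Thus it suffices to exhibit a finite group $G$ and an element $x\in G$ with $Q\cong Q(G,\{x\})$. Note that the enveloping group $G_Q$ cannot serve as this group, since $G_Q$ is always infinite; the whole point is to replace it by a suitable finite group.

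I would take $G$ to be the generalized dihedral group $G={\rm Dih}(H)=H\rtimes\Z/2\Z$, in which the generator $t$ of $\Z/2\Z$ acts on $H$ by $h\mapsto -h$; it has order $2|H|$, so it is finite. Writing an element of $G$ as a pair $(h,\varepsilon)$ with $h\in H$, $\varepsilon\in\Z/2\Z$, and calling an element of the form $(h,1)$ a reflection, a short computation shows that every reflection is an involution and that
$$(k,1)^{-1}(h,1)(k,1)=(2k-h,1)$$
for all $h,k\in H$. Consequently the map $h\mapsto(h,1)$ is a bijection of $H$ onto the set $R$ of all reflections of $G$, and it carries the operation $x*y=2y-x$ of $T(H)$ to the conjugation operation on $R$; that is, $T(H)$ is isomorphic to the subquandle of ${\rm Conj}(G)$ whose underlying set is $R$.

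Finally I would identify $R$ with a single conjugacy class. Conjugating the reflection $x=(0,1)$ by all elements of $G$ and using the displayed identity (together with the analogous identity for conjugation by the elements $(k,0)$), one obtains exactly the set $\{(h,1)\mid h\in 2H\}$, which equals $R$ precisely because $2H=H$; hence $R$ is the conjugacy class of $x$ in $G$. Applying the earlier proposition that identifies $Q(G,A)$ with the union of the conjugacy classes of the elements of $A$ when these elements are pairwise non-conjugate --- here with the singleton $A=\{x\}$, whose elements are vacuously pairwise non-conjugate --- we conclude that $Q(G,\{x\})$ is isomorphic to the subquandle of ${\rm Conj}(G)$ on $R$, which by the previous paragraph is isomorphic to $T(H)=Q$. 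The only genuine choice in the argument is the passage from $G_Q$ to $G={\rm Dih}(H)$; once that group is fixed everything reduces to routine conjugation computations, and the odd-order hypothesis is used exactly once, to guarantee that all reflections of $G$ are conjugate (equivalently, that $T(H)$ is connected).
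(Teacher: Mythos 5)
Your proof is correct, but it takes a genuinely different route from the paper. The paper keeps working inside the enveloping group: it first checks that $\theta\colon Q\to G_Q$ is injective (no $2$-torsion) and that $Q$ is connected, so Theorem~\ref{quasiuniv} gives $Q=Q(G_Q,\{x\})$; it then invokes results of Garcia Iglesias and Vendramin (that $G_Q$ is an FC-group and $G_Q=[G_Q,G_Q]\rtimes\langle x\rangle$), observes that $x^2$ is central, and shows that passing to the finite quotient $G_Q/\langle x^2\rangle$ does not change the associated $(G,\{x\})$-quandle, which is where the finite group comes from. You instead bypass $G_Q$, Theorem~\ref{quasiuniv} and the external FC-group input entirely, and exhibit the finite group concretely as the generalized dihedral group $G=H\rtimes\mathbb{Z}/2\mathbb{Z}$ of order $2|H|$: the computation $(k,1)^{-1}(h,1)(k,1)=(2k-h,1)$ identifies $T(H)$ with the set of reflections inside ${\rm Conj}(G)$, the odd-order hypothesis (via $2H=H$) makes the reflections a single conjugacy class, and the paper's proposition identifying $Q(G,A)$ with the union of the conjugacy classes of $A$ when the elements of $A$ are pairwise non-conjugate (vacuous for a singleton) finishes the argument. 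Your version is more elementary and gives an explicit, small witness for $G$, which the paper's proof does not; the paper's version buys uniformity with the rest of the section, since the same quotient-of-$G_Q$ mechanism is what handles the even dihedral case in Lemma~\ref{evendih} and fits the general philosophy that the finite group arises canonically from the enveloping group.
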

\begin{proof}Let us prove that the natural map $\theta: Q\to G_Q$ is injective. Suppose that $\theta(x)=\theta(y)$ for
some  elements $x, y$ from $Q$. Since ${\rm Inn}(Q)$ is the quotient of $G_Q$, for all $z\in Q$ we have
$$2x-z=z*x=z*y=2y-z,$$
 since $H$ has no $2$-torsion, $x=y$ and $\theta$ is injective.

Since the order of $H$ is odd, for every element $x\in H$ there exists an element $y\in H$, such that $x=2y=0*y$, therefore $Q$ is connected. By Theorem \ref{quasiuniv} we have $Q=Q(G_Q,\{x\})$ for an arbitrary element $x\in Q$.

From \cite[Lemma 2.3]{GarVen} follows that $G_Q=[G_Q,G_Q]\rtimes\langle x\rangle$. Since $Q=T(H)$ is the Takasaki quandle, for every element $y\in Q$ we have $(y*x)*x=y$, therefore $x^2$ belongs to the center of $G_Q$, in particular, $\langle x^2\rangle$ is a normal subgroup of $G_Q$. Denote by $\overset{\overline{~~~~}}{}:G_Q\to G_Q/\langle x^2\rangle$ the natural homomorphism and let us prove that the map $\varphi:Q(G_Q,\{x\})\to Q(\overline{G_Q},\{\overline{x}\})$ given by $\varphi(x,y)=(\overline{x},\overline{y})$ is the isomorphism. The fact that $\varphi$ is surjective homomorphism is obvious.

Let us prove that $\varphi$ is injective. If $(\overline{x},\overline{y})=\varphi(x,y)=\varphi(x,z)=(\overline{x},\overline{z})$, then $\overline{y}=\overline{h}\overline{z}$ for some $\overline{h}$ satisfying $\overline{h}^{-1}\overline{x}\overline{h}=\overline{x}$. Going to preimages we have $y=x^{2n}hz$ for some $n$ and $h$ satisfying $h^{-1}xh=x^{2m+1}$ for some $m$. Since $h^{-1}xh$ belongs to $Q$, $x^{2m+1}$ must be equal to $x$, therefore $y=x^{2n}hz$ for $x^{2n}h\in C_{G_Q}(x)$, therefore $(x,y)=(x,z)$ and $\varphi$ is injective. So, $Q=Q(\overline{[G_Q,G_Q]}\rtimes\mathbb{Z}_2,\{\overline{x}\})$. By \cite[Lemma 2.2]{GarVen} the group $G_Q$ is the FC-group, therefore $[G_Q,G_Q]$ is finite and $\overline{[G_Q,G_Q]}\rtimes\mathbb{Z}_2$ is finite.
\end{proof}

\begin{lemma}\label{evendih} Let $n$ be a positive integer. Then there exists a finite group $G$ and two elements $x,y\in G$ such that ${\rm R}_{2n}=Q(G,\{x,y\})$.
\end{lemma}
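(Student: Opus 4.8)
The plan is to give $G$ explicitly as a dihedral group and to recognize $Q(G,\{x,y\})$ as the conjugation quandle on the reflections of $G$, invoking the proposition above which identifies $Q(G,A)$ with a union of conjugacy classes whenever the elements of $A$ are pairwise non-conjugate in $G$. Concretely, I would take $G=D_{2n}=\langle x,y\mid x^2=y^2=(xy)^{2n}=1\rangle$, the dihedral group of order $4n$, and set $A=\{x,y\}$.

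The heart of the argument is then a short bookkeeping of reflections. Writing $r=xy$ (a rotation of order $2n$) and $z_k=r^k x$ for $k\in\mathbb{Z}/2n\mathbb{Z}$, the $z_k$ are exactly the $2n$ reflections of $D_{2n}$, and a one-line computation using $xr=r^{-1}x$ gives $z_k*z_l=z_l^{-1}z_kz_l=z_{2l-k}$ in ${\rm Conj}(D_{2n})$; hence $k\mapsto z_k$ is an isomorphism from ${\rm R}_{2n}$ onto the subquandle of ${\rm Conj}(D_{2n})$ supported on the reflections. Next I would observe that conjugation shifts the index of $z_k$ only by an even amount (conjugation by $r^j$ sends $z_k$ to $z_{k+2j}$, conjugation by $z_l$ sends $z_k$ to $z_{2l-k}$), so the parity of the index is a conjugacy invariant and $D_{2n}$ has precisely two conjugacy classes of reflections: the class $\{z_{2i}\}$ of $x=z_0$ and the class $\{z_{2i+1}\}$ of $y=z_{2n-1}$, each of size $n$, whose union is the whole set of reflections. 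In particular $x$ and $y$ are not conjugate, so the quoted proposition gives that $Q(D_{2n},\{x,y\})$ is isomorphic to the subquandle of ${\rm Conj}(D_{2n})$ on $x^{D_{2n}}\cup y^{D_{2n}}$, which by the previous sentence is the reflection subquandle ${\rm R}_{2n}$. Since $D_{2n}$ is finite, this is exactly ${\rm R}_{2n}=Q(G,\{x,y\})$ with $G=D_{2n}$.

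All of this is routine; the only step needing a little care is making sure that $x^{D_{2n}}\cup y^{D_{2n}}$ exhausts the reflections rather than covering only some of them, which is precisely where the evenness of $2n$ (equivalently, $\mathbb{Z}_{2n}$ having even order) forces exactly two reflection classes. I would also dispatch the degenerate case $n=1$ by hand: $D_2$ is the Klein four-group, $x,y$ remain non-conjugate, and the claim becomes ${\rm R}_2=T_2=Q(D_2,\{x,y\})$. As a fallback, if the conjugacy-class description is awkward to cite, one can instead start from ${\rm R}_{2n}=Q(G_{{\rm R}_{2n}},\{a_0,a_1\})$ (Corollary~\ref{dihinj} together with Theorem~\ref{quasiuniv}), pass to the finite quotient $G_{{\rm R}_{2n}}/\langle a_0^2,a_1^2\rangle\cong D_{2n}$ furnished by Corollary~\ref{dihinj}, and note that the induced quandle homomorphism $Q(G_{{\rm R}_{2n}},\{a_0,a_1\})\to Q(D_{2n},\{\bar a_0,\bar a_1\})$ is a surjection between finite quandles of equal cardinality $2n$, hence an isomorphism.
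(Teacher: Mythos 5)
Your proof is correct, but it takes a genuinely different route from the paper. The paper first invokes Corollary~\ref{dihinj} and Theorem~\ref{quasiuniv} to write ${\rm R}_{2n}=Q(G_{{\rm R}_{2n}},\{a_0,a_1\})$, and then descends along the quotient $G_{{\rm R}_{2n}}\to G_{{\rm R}_{2n}}/\langle a_0^2,a_1^2\rangle\cong D_{2n}$, proving injectivity of the induced map $Q(G_{{\rm R}_{2n}},\{a_0,a_1\})\to Q(D_{2n},\{\overline{a_0},\overline{a_1}\})$ by a centralizer argument that relies on the presentation from Proposition~\ref{2krep}. You instead work directly inside the finite dihedral group: you verify that the $2n$ reflections $z_k=r^kx$ form a subquandle of ${\rm Conj}(D_{2n})$ isomorphic to ${\rm R}_{2n}$ via $z_k*z_l=z_{2l-k}$, that the parity of the index is a conjugacy invariant, so $x$ and $y$ are non-conjugate and their two classes exhaust the reflections, and then you apply the proposition of Section~\ref{GAprop} identifying $Q(G,A)$ with the union of the conjugacy classes of $A$ when the elements of $A$ are pairwise non-conjugate. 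Your argument is more elementary and self-contained: it never touches the enveloping group and exhibits the finite group explicitly from the start, whereas the paper's route reuses the machinery of Corollary~\ref{dihinj} and parallels the quotient argument of Lemma~\ref{odddih}, arriving at the same group $D_{2n}$ and essentially the same pair of elements. Your fallback via counting also works, but note that it still requires knowing $|Q(D_{2n},\{\overline{a_0},\overline{a_1}\})|=2n$, i.e.\ that each reflection class has size $n$, which is the same bookkeeping as in your main argument; the paper sidesteps any counting by proving injectivity of the induced map directly.
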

\begin{proof} By Corollary \ref{dihinj} and Theorem \ref{quasiuniv} the quandle ${\rm R}_{2n}$ can be presented in the form ${\rm R}_{2n}=Q(G_{{\rm R}_{2n}},\theta(A))$, where $A$ is the set of representatives of orbits of ${\rm R}_{2n}$. It is clear that ${\rm R}_{2n}={\rm Orb}(x_0)\cup {\rm Orb}(x_1)$, so, $A=\{a_0,a_1\}$ and ${\rm R}_{2n}=Q(G_{{\rm R}_{2n}},\{a_0,a_1\})$.

Similarly to the proof of Corollary~\ref{dihinj} denote by $H$ the subgroup $\langle a_0^2, a_1^2\rangle$ of $G_{{\rm R}_{2n}}$. This subgroup is normal in $G_{{\rm R}_{2n}}$ and from (\ref{sesd}) we have the short exact sequence
$$1\to H\to G_{{\rm R}_{2n}}\to D_{2n}\to 1.$$
 Let $\overset{\overline{~~~~}}{}:G_{{\rm R}_{2n}}\to G_{{\rm R}_{2n}}/H=D_{2n}$ be the natural homomorphism and let us prove that the map $\varphi:Q(G_{{\rm R}_{2n}},\{a_0,a_1\})\to Q(D_{2n},\{\overline{a_0},\overline{a_1}\})$ given by $\varphi(a_{\varepsilon},x)=(\overline{a_{\varepsilon}},\overline{x})$ is the isomorphism. The fact that $\varphi$ is surjective homomorphism is obvious.

Let us prove that $\varphi$ is injective. If $(\overline{a_{\varepsilon}},\overline{x})=\varphi(a_{\varepsilon},x)=\varphi(a_{\eta},y)=(\overline{a_{\eta}},\overline{y})$, then $\varepsilon=\eta$ (it follows from the proof of Corollary~\ref{dihinj}). Then $\overline{x}=\overline{h}\overline{y}$ for some element $\overline{h}$ which satisfies $\overline{h}^{-1}\overline{a_{\varepsilon}}\overline{h}=\overline{a_{\varepsilon}}$. Going to the preimages we have $x=a_0^{2r}a_1^{2s}hy$ for some integers $r,s$ and $h$ satisfying $h^{-1}a_{\varepsilon}h=a_{\varepsilon} a_0^{2p}a_1^{2q}$ for some integers $p,q$. Since $h^{-1}a_{\varepsilon}h$ belongs to ${\rm R}_{2n}$, from Proposition~\ref{2krep} follows that the element $a_{\varepsilon} a_0^{2p}a_1^{2q}$ must be equal to $a_{\varepsilon}$, therefore $x=a_0^{2r}a_1^{2s}hy$ for $a_0^{2r}a_1^{2s}h\in C_{G_{{\rm R}_{2n}}}(a_{\varepsilon})$, therefore $(a_{\varepsilon},x)=(a_{\eta},y)$ and $\varphi$ is injective, so, ${\rm R}_{2n}=Q(D_{2n},\{\overline{a_0},\overline{a_1}\})$.
\end{proof}

Let $Q=\langle Q,*\rangle$ and $P=\langle P,\circ\rangle$ be quandles. Denote by $Q\times P$ the quandle $\langle Q\times P, \star\rangle$ with the componentwise operation $(a,x)\star (b,y)=(a*b,x\circ y)$ for $a,b\in Q$, $x,y\in P$. The quandle $Q\times P$ is called the direct product of quandles $Q$ and $P$.
\begin{prop}\label{dirQQ} Let $Q=Q(G,A)$, $P=Q(H,B)$. Then $Q\times P=Q(G\times H, A\times B)$.
\end{prop}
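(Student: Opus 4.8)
The plan is to write down the obvious candidate map and verify it is a well-defined bijective homomorphism; everything reduces to one structural fact about centralizers in a direct product. Define
$$\Phi\colon Q(G\times H,\,A\times B)\longrightarrow Q(G,A)\times Q(H,B),\qquad \Phi\big[\big((a,b),(u,v)\big)\big]=\big(\,[(a,u)],\,[(b,v)]\,\big),$$
for $a\in A$, $b\in B$, $u\in G$, $v\in H$, where $[\,\cdot\,]$ denotes the relevant equivalence class. The key observation is that $C_{G\times H}\big((a,b)\big)=C_G(a)\times C_H(b)$, so that the equivalence relation defining $Q(G\times H,A\times B)$ is exactly the product of the relation $(a,gh)\sim(a,h)$ (with $g\in C_G(a)$) on $A\times G$ with the analogous relation on $B\times H$; concretely, $((a,b),(u,v))\sim((a,b),(u',v'))$ in $(A\times B)\times(G\times H)$ if and only if $u'u^{-1}\in C_G(a)$ and $v'v^{-1}\in C_H(b)$.

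From this splitting, well-definedness and injectivity are immediate. If $((a,b),(u,v))\sim((a,b),(u',v'))$ then $u'u^{-1}\in C_G(a)$ and $v'v^{-1}\in C_H(b)$, hence $[(a,u)]=[(a,u')]$ and $[(b,v)]=[(b,v')]$, so $\Phi$ is well defined. Conversely, if $[(a,u)]=[(a',u')]$ and $[(b,v)]=[(b',v')]$ then $a=a'$, $b=b'$, $u'u^{-1}\in C_G(a)$ and $v'v^{-1}\in C_H(b)$, so $(u',v')(u,v)^{-1}\in C_{G\times H}((a,b))$ and the two preimages agree; thus $\Phi$ is injective. Surjectivity is trivial, since $([(a,u)],[(b,v)])=\Phi[((a,b),(u,v))]$.

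It remains to check that $\Phi$ respects the quandle operations, which is the direct computation
$$\big[\big((a,b),(u,v)\big)\big]*\big[\big((a',b'),(u',v')\big)\big]=\big[\big((a,b),\,(uu'^{-1}a'u',\ vv'^{-1}b'v')\big)\big],$$
whose $\Phi$-image is $\big([(a,\,uu'^{-1}a'u')],\,[(b,\,vv'^{-1}b'v')]\big)=\big([(a,u)]*[(a',u')],\ [(b,v)]\circ[(b',v')]\big)$, and this is precisely $\Phi[((a,b),(u,v))]\star\Phi[((a',b'),(u',v'))]$ by the definition of the direct product quandle. Hence $\Phi$ is an isomorphism and $Q\times P=Q(G\times H,A\times B)$.

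There is no genuine obstacle here; the one point requiring care is the identification of the two equivalence relations, i.e. that the class of $((a,b),(u,v))$ is determined by the pair of classes of $(a,u)$ and $(b,v)$ and by nothing more — which is where $C_{G\times H}((a,b))=C_G(a)\times C_H(b)$ enters and should be stated explicitly. Alternatively, one could argue through the augmented-quandle description recalled before \lemref{simple}, observing that the coordinatewise action of $G\times H$ on $Q\times P$ together with the product augmentation map realizes $Q\times P$ as an augmented quandle with acting group $G\times H$; but the explicit map above is just as short.
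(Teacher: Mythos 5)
Your proof is correct and follows essentially the same route as the paper: the same coordinate-splitting map between $Q(G\times H,A\times B)$ and $Q(G,A)\times Q(H,B)$, with well-definedness (and injectivity) resting on the identity $C_{G\times H}((a,b))=C_G(a)\times C_H(b)$ and the homomorphism property checked by the same direct computation. You merely spell out the bijectivity and compatibility checks that the paper leaves as ``obvious'' and ``direct calculations,'' which is fine.
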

\begin{proof} Denote by $\varphi$ the map $\varphi:Q(G\times H, A\times B)\to Q(G, A)\times Q(H,B)$ given by
\begin{equation}\label{directprod}
((a,b),(g,h))\varphi=((a,g),(b,h))
\end{equation}
for $a\in A$, $b\in B$, $g\in G$, $h\in H$. Let us prove that $\varphi$ is correct, i.~e. if $x=y$, then $\varphi(x)=\varphi(y)$. If $((a_1,b_1),(g_1,h_1))=((a_2,b_2),(g_2,h_2))$ for $a_1,a_2\in A$, $b_1,b_2\in B$, $g_1,g_2\in G$, $h_1,h_2\in H$, then by the definition of $(G\times H,A\times B)$-quandle we have $(a_1,b_1)=(a_2,b_2)$ and  $(g_1,h_1)=(xg_2,yh_2)$ for $(x,y)\in C_{G\times H}((a_1,b_1))=C_{G}(a_1)\times C_{H}(b_1)$. Therefore $(a_1,g_1)=(a_2,g_2)$ in $Q(G,A)$ and $(b_1,h_1)=(b_2,h_2)$ in $Q(H,B)$, hence $((a_1,g_1),(b_1,h_1))=((a_2,g_2),(b_2,h_2))$ in $Q(G,A)\times Q(H,B)$ and $\varphi$ is correct.

The fact that $\varphi$ is bijective is obvious (the inverse to $\varphi$ map can be easily found from~(\ref{directprod})). The fact that $\varphi$ is the homomorphism follows from the direct calculations. Therefore $\varphi$ is the isomorphism from $Q(G\times H,A\times B)$ to $Q(G,A)\times Q(H,B)$ and the proposition is proved.
\end{proof}
Lemmas \ref{fram}, \ref{odddih}, \ref{evendih} and Proposition \ref{dirQQ} imply the following result.
\begin{theorem}\label{fintak}
Let $H$ be a finitely generated abelian group, then there exists a group $G$ and its subset $A$ such that  $|A|=|H:2H|$ and $T(H)=Q(G,A)$. If $H$ is finite, then $G$ can be chosen finite.
\end{theorem}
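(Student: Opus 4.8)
The plan is to reduce the general finitely generated abelian group to the cases already handled and then glue them with the direct-product lemma. By the structure theorem for finitely generated abelian groups, write $H = H_0 \times H_1 \times H_2$, where $H_0 \cong \mathbb{Z}^r$ is free abelian of finite rank, $H_1$ is a finite abelian group of odd order, and $H_2$ is a finite abelian $2$-group. A routine check shows that the Takasaki quandle respects direct products of abelian groups: $T(H_0 \times H_1 \times H_2) \cong T(H_0) \times T(H_1) \times T(H_2)$, since the operation $x*y = 2y-x$ is computed coordinatewise. So it suffices to realize each factor as a $(G,A)$-quandle for a suitable $G$ (finite whenever the factor is finite) and then invoke Proposition~\ref{dirQQ} twice to obtain $T(H) = Q(G_0 \times G_1 \times G_2, A_0 \times A_1 \times A_2)$, which is finite as soon as $G_0$ can be taken finite, i.e. as soon as $H$ is finite (in which case $r=0$ and $G_0$ is trivial).

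Next I would treat the three factors. For $H_0 \cong \mathbb{Z}^r$, Lemma~\ref{fram} gives a group $G_0$ and a subset $A_0$ with $|A_0| = |H_0 : 2H_0| = 2^r$ and $T(H_0) = Q(G_0, A_0)$. For $H_1$ of odd order, Lemma~\ref{odddih} gives a \emph{finite} group $G_1$ and a single element so that $T(H_1) = Q(G_1, \{x\})$, with $|A_1| = 1 = |H_1 : 2H_1|$ (the last equality because multiplication by $2$ is an automorphism of an odd-order group). The $2$-group factor $H_2$ is the only genuinely new piece: here I would decompose $H_2$ as a product of cyclic $2$-groups $\mathbb{Z}/2^{k}$ and realize each $T(\mathbb{Z}/2^{k}) = {\rm R}_{2^k}$ as a $(G,A)$-quandle with $G$ finite. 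For $k \geq 1$ this is $\mathrm{R}_{2n}$ with $n = 2^{k-1}$, so Lemma~\ref{evendih} applies directly and gives a finite group (namely a quotient of $G_{\mathrm{R}_{2^k}}$ by the central subgroup $\langle a_0^2, a_1^2\rangle$, which is the dihedral group $D_{2^{k-1}}$ in the notation there) together with a two-element set $\{x,y\}$; note $|H_2 : 2H_2| = 2^{(\text{number of cyclic factors})}$, matching the size of the product of the two-element sets under Proposition~\ref{dirQQ}. (For the degenerate case $n=1$, i.e. $\mathrm{R}_2 = T_2$, the same conclusion holds trivially with $G$ the Klein four-group or any group with two non-conjugate elements.) Assembling, $|A| = |A_0|\cdot |A_1| \cdot |A_2| = 2^r \cdot 1 \cdot 2^{(\text{number of $2$-primary cyclic factors})} = |H : 2H|$, as required.

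The main obstacle is bookkeeping rather than a deep new idea: one must verify that the direct-product decomposition of $H$ induces a \emph{compatible} decomposition of the orbit-representative sets, so that the cardinality $|A| = |H:2H|$ is preserved under the iterated application of Proposition~\ref{dirQQ}; this requires observing that orbits in $T(H)$ are exactly the cosets of $2H$ and that $2(H_0\times H_1\times H_2) = 2H_0 \times 2H_1 \times 2H_2$, so $|H:2H|$ is multiplicative over the factors and equals the product of the sizes of the sets supplied by Lemmas~\ref{fram}, \ref{odddih}, and \ref{evendih}. A secondary subtlety is that Lemma~\ref{evendih} is stated only for the single cyclic quandle $\mathrm{R}_{2n}$; to cover an arbitrary finite abelian $2$-group one must first split it into cyclic $2$-groups and apply the lemma factor by factor, again via Proposition~\ref{dirQQ}. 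Once these compatibility points are in place the proof is a short synthesis of the already-established lemmas.
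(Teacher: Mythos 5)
Your proposal is correct and follows essentially the same route as the paper: decompose $H$ via the structure theorem, use the coordinatewise identity $T(H_1\times H_2)=T(H_1)\times T(H_2)$, and assemble Lemmas~\ref{fram}, \ref{odddih}, \ref{evendih} with Proposition~\ref{dirQQ}. Your only deviations are cosmetic --- treating the whole odd part at once via Lemma~\ref{odddih} instead of cyclic prime-power pieces, and spelling out the multiplicativity of $|H:2H|$, which the paper leaves implicit.
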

\begin{proof}  If $H_1$, $H_2$ are abelian groups, then $T(H_1\times H_2)=T(H_1)\times T(H_2)$. This fact follows from the fact that for $x_1,x_2\in H_1$, $y_1, y_2\in H_2$ we have the identity
$$(x_1,y_1)*(x_2,y_2)=2(x_2,y_2)-(x_1,y_1)=(2x_2-x_1,2y_2-y_1)=(x_1*x_2,y_1*y_2).$$
Since $H$ is a finitely generated abelian group, it is a direct product of cyclic groups of power prime orders and a free abelian group $H_0$. Therefore $T(H)$ is the direct product of dihedral quandles of power prime orders and the Takasaki quandle  $T(H_0)$.
The result follows directly from Lemmas \ref{fram}, \ref{odddih}, \ref{evendih} and Proposition \ref{dirQQ}.
\end{proof}

\section{Free product of quandles}\label{freeqi}
Similarly to groups and other algebraic systems, it is possible  to present quandles by generators and relations as follows.
Given  a set of letters $X$ denote by $WQ(X)$ the set of words in the alphabet $X\cup \{*, *^{-1},(,)\}$ inductively defined by the rules
\begin{enumerate}
\item $x\in WQ(X)$ for all $x\in X$,
\item if $a,b\in WQ(X)$, then $(a)*(b), (a)*^{-1}(b)\in WQ(X).$
\end{enumerate}
The set $WQ(X)$ is called the set of quandle words on $X$. Let $\{r_i,s_i~|~i\in I\}$ be some set of words from $WQ(X)$ and $R=\{r_i=s_i~|~i\in I\}$ be the set of formal equalities. Denote by $\sim$ an equivalence relation on $WQ(X)$ given by the rule:
\begin{enumerate}
\item  $x\sim x*x\sim x*^{-1}x$ for all $x\in WQ(X)$,
\item  $x\sim \left(x*y\right)*^{-1}y\sim \left(x*^{-1}y\right)*y$ for all $x,y\in WQ(X)$,
\item $\left(x*y\right)*z\sim\left(x*z\right)*(y*z)$ for all $x,y,z\in WQ(X)$,
\item $r_i\sim s_i$ if the equality $r_i=s_i$ belongs to $R$.
\end{enumerate}
The quotient $WQ(X)/{\sim}$ with well defined operation $(x,y)\mapsto x*y$ is obviously a quandle. We say that this quandle is presented by the set of generators $X$ and the set of relations $R$ and denote it by $\langle X~|~R\rangle$. It is clear that every quandle $Q$ can be presented by the set of generators $X$ and the set of relations $R$: we can take $X=Q$ and $R$ the set of all equalities $x*y=z$ in $Q$.

For two quandle $Q=\langle X~|~R\rangle$, $P=\langle Y~|~S\rangle$ with $X\cap Y=\varnothing$ denote by $Q*P$ the quandle
$$
Q * P = \langle X \cup Y~|~R \cup S\rangle.
$$
The quandle $Q*P$ is called the free product of quandles $Q, P$.
It is clear that the free product depends on quandles $Q, P$, but not  on their presentations. Indeed, if $Q=\langle X_1~|~R_1\rangle=\langle X_2~|~R_2\rangle$, $P=\langle Y_1~|~S_1\rangle=\langle Y_2~|~S_2\rangle$, then a couple of isomorphisms $\langle X_1~|~R_1\rangle\to\langle X_2~|~R_2\rangle$, $\langle Y_1~|~S_1\rangle\to\langle Y_2~|~S_2\rangle$ can be naturally extended to the isomorphism $\langle X_1\cup X_2~|~R_1\cup R_2\rangle\to\langle Y_1\cup Y_2~|~S_1\cup S_2\rangle$.
\begin{lemma}\label{envfr}Let $Q$, $P$ be quandles. Then $G_{Q*P}=G_Q*G_P$. Moreover, if the natural maps $Q\to G_Q$, $P\to G_P$ are injective, then the natural map $\theta:Q*P\to G_{Q*P}$ is injective.
\end{lemma}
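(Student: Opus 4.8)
The plan is to prove the two assertions in turn. For the identity $G_{Q*P}=G_Q*G_P$ I would invoke the adjunction recalled in Section~\ref{secfr}: the functor $Q\mapsto G_Q$ (Joyce's $\mathrm{Adconj}$) is left adjoint to $\mathrm{Conj}$, hence preserves coproducts; since $Q*P=\langle X\cup Y\mid R\cup S\rangle$ is exactly the coproduct of $Q$ and $P$ among quandles, and $G_Q*G_P$ is the coproduct among groups, applying $\mathrm{Adconj}$ yields $G_{Q*P}=G_Q*G_P$, naturally in the maps from $Q$ and $P$. (Alternatively one writes down mutually inverse homomorphisms: one $G_Q*G_P\to G_{Q*P}$ from the presentations of $G_Q$ and $G_P$, and one $G_{Q*P}\to G_Q*G_P$ from the universal property of $G_{Q*P}$ applied to the quandle homomorphism $Q*P\to\mathrm{Conj}(G_Q*G_P)$ induced by $Q\to G_Q\hookrightarrow G_Q*G_P$ and $P\to G_P\hookrightarrow G_Q*G_P$.) Henceforth write $G=G_{Q*P}=G_Q*G_P$; it acts on $Q*P$ through the quotient $G\to\mathrm{Inn}(Q*P)$, and for $g=\theta(x_1)^{\varepsilon_1}\cdots\theta(x_n)^{\varepsilon_n}$ (with $x_i\in Q*P$) and $u\in Q*P$ I put $u\cdot g=u*^{\varepsilon_1}x_1*^{\varepsilon_2}\cdots*^{\varepsilon_n}x_n$, so that $\theta(u\cdot g)=g^{-1}\theta(u)g$ because $\theta$ is a homomorphism into $\mathrm{Conj}(G)$.

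For the injectivity of $\theta\colon Q*P\to G$ I would prepare three reductions. First, $Q\to Q*P$ and $P\to Q*P$ are injective: the constant map $P\to Q$ at any point is a quandle homomorphism (by axiom (q1)), so with $\mathrm{id}_Q$ it induces a retraction $Q*P\to Q$, making $Q$ (and likewise $P$) a subquandle of $Q*P$. Second, using the rack identity~(\ref{hope}) and its $*^{-1}$-analogue one checks that the elements of $Q*P$ of the form $s*^{\delta_1}\ell_1*^{\delta_2}\cdots*^{\delta_n}\ell_n$ with $s,\ell_i\in Q\cup P$ form a subquandle, hence all of $Q*P$; combining this with~(\ref{canon}) applied inside $Q$ and inside $P$, and fixing sets $A,B$ of orbit representatives of $Q$ and $P$, every element of $Q*P$ has the form $a*^{\varepsilon_1}x_1*^{\varepsilon_2}\cdots*^{\varepsilon_n}x_n$ with $a\in A\cup B$ and $x_i\in Q\cup P$, i.e.\ lies in the orbit $a\cdot G$. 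Third, $\theta(u)\ne 1$ in $G$ for every $u\in Q*P$, since its image under the surjection of $G$ onto the enveloping group $\mathbb{Z}^{k}$ of the trivial quandle on the set of orbits (cf.\ Section~\ref{secfr}) is a nonzero vector.

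The core step is the identification of the point stabilizers $\mathrm{Stab}_G(a)$ for $a\in A\cup B$. One inclusion $\mathrm{Stab}_G(a)\subseteq C_G(\theta(a))$ is immediate from $\theta(a\cdot g)=g^{-1}\theta(a)g$. Conversely, say $a\in A$; since $\theta_Q\colon Q\to G_Q$ is injective, Theorem~\ref{quasiuniv} identifies $Q$ with $Q(G_Q,\theta(A))$, and in $Q(G_Q,\theta(A))$ the $G_Q$-stabilizer of the class of $(a,1)$ equals $C_{G_Q}(\theta(a))$ directly from the defining equivalence relation; since the embedding $Q\hookrightarrow Q*P$ intertwines the $G_Q$-action on $Q$ with the restriction to $G_Q\le G$ of the action on $Q*P$, this gives $C_{G_Q}(\theta(a))\subseteq\mathrm{Stab}_G(a)$. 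Here the structure theory of free products enters: the centralizer in $G=G_Q*G_P$ of a nontrivial element of a free factor lies in that factor, so $C_G(\theta(a))=C_{G_Q}(\theta(a))$, and hence $\mathrm{Stab}_G(a)=C_G(\theta(a))=C_{G_Q}(\theta(a))$ (and symmetrically for $a\in B$). I would also record, from the conjugacy theorem for free products, that two nontrivial elements lying in free factors of $G$ are conjugate in $G$ only if they lie in the same factor and are conjugate there.

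Now suppose $\theta(u)=\theta(v)$ and write $u=a\cdot g$, $v=b\cdot g'$ with $a,b\in A\cup B$ as in the second reduction. Then $\theta(a)=g\theta(u)g^{-1}$ and $\theta(b)=g'\theta(u)g'^{-1}$ are conjugate in $G$, so $a$ and $b$ lie on the same side, say both in $A$, with $\theta(b)=h^{-1}\theta(a)h$ for some $h\in G_Q$; writing $h$ as a word in the generators $\theta(z_i)$ ($z_i\in Q$) gives $\theta(b)=\theta(a*^{\zeta_1}z_1*^{\zeta_2}\cdots*^{\zeta_l}z_l)$, so injectivity of $\theta_Q$ forces $b$ into the orbit of $a$ in $Q$, whence $a=b$. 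Finally $\theta(a\cdot g)=\theta(a\cdot g')$ yields $g'g^{-1}\in C_G(\theta(a))=\mathrm{Stab}_G(a)$, so $v=a\cdot g'=(a\cdot(g'g^{-1}))\cdot g=a\cdot g=u$ and $\theta$ is injective. The main obstacle here is not the quandle bookkeeping (the reduction to the form $a*^{\varepsilon_1}x_1\cdots$ is a routine if slightly delicate manipulation with the rack axioms) but the group-theoretic input: everything rests on the facts that in $G_Q*G_P$ the centralizer of, and conjugacy between, elements of a free factor are detected inside that factor, which I would quote from a standard reference (Kurosh's theorem, or the Bass--Serre tree of $G_Q*G_P$). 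In effect the lemma asserts that no collapsing occurs in $G_Q*G_P$ beyond that already present in $G_Q$ and $G_P$.
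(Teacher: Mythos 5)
Your proposal is correct, and it rests on the same group-theoretic pillars as the paper (normal/left-normed forms in $Q*P$ plus the fact that conjugacy and centralizers of nontrivial elements of a free factor of $G_Q*G_P$ are detected inside that factor), but it is organized differently. For $G_{Q*P}=G_Q*G_P$ the paper does not invoke the adjunction/coproduct argument; it translates presentations directly via the map $\alpha$ sending quandle words to group words, so that $G_{Q*P}=\langle X\cup Y\mid \alpha(R)\cup\alpha(S)\rangle=G_Q*G_P$ -- essentially your parenthetical ``alternative''. For injectivity the paper avoids orbit representatives, stabilizers and Theorem~\ref{quasiuniv} altogether: given $\theta(a)=\theta(b)$, it writes $a=a_1*^{\varepsilon_2}\cdots*^{\varepsilon_n}a_n$ and $b=b_1*^{\eta_2}\cdots*^{\eta_k}b_k$ with letters in $Q\cup P$, notes that $a_1,b_1$ must lie in the same factor, forms $c=a*^{-\eta_k}b_k*\cdots*^{-\eta_2}b_2$ so that $\theta(c)=\theta(b_1)$, uses the free-product fact that the conjugating element lies in $G_Q$ to rewrite that conjugator as a word in elements of $Q$ and conclude $c\in Q$, and then injectivity of $\theta$ on $Q$ gives $c=b_1$, hence $a=b$. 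Your route buys something the paper only extracts later: identifying $\mathrm{Stab}_G(a)$ with $C_{G_Q}(\theta(a))$ for an orbit representative $a$ makes the isomorphism $Q*P\cong Q(G_Q*G_P,A\cup B)$ of Theorem~\ref{freere} essentially visible already inside the lemma, and you also make explicit some points the paper leaves implicit (the retraction showing $Q,P$ embed in $Q*P$, the reduction to left-normed words via~(\ref{hope}), and $\theta(u)\neq 1$); the cost is that your proof of the lemma depends on Theorem~\ref{quasiuniv}, whereas the paper keeps the lemma self-contained and only combines the two results afterwards.
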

\begin{proof} For the set $X$ of letters denote by $WG(X)$ the set of all group words on $X$ and let $\alpha:WQ(X)\to WG(X)$ be the map defined by: $\alpha(x)=x$ for $x\in X$, $\alpha(a*b)=\alpha(b)^{-1}\alpha(a)\alpha(b)$, $\alpha(a*^{-1}b)=\alpha(b)\alpha(a)\alpha(b)^{-1}$ for $a,b\in WQ(X)$. If $Q=\langle X~|~R\rangle$, $P=\langle Y~|~S\rangle$, then from the definition of the enveloping group it is clear that $G_Q=\langle X~|~\alpha(R)\rangle$, $G_P=\langle Y~|~\alpha(S)\rangle$. Therefore $G_{Q*P}=G_Q*G_P$.

Let $a, b\in Q*P$ be two elements such that $\theta(a)=\theta(b)$. The elements $a,b$ can be written in the forms
$a=a_1*^{\varepsilon_2}\dots *^{\varepsilon_n} a_n$, $b=b_1*^{\eta_2}\dots *^{\eta_k} b_k$, where each $a_i,b_j$ belongs to either $P$ or $Q$ and $\varepsilon_2,\dots,\varepsilon_n,\eta_2,\dots,\eta_k\in \{\pm1\}$. It is clear that $a_1,b_1$ both belong to either $P$ or $Q$ (since $\theta$ is injective on $Q$ and $P$, and there is no nontrivial element in $G_P$ conjugate to some element in $G_Q$). Without loss of generality we can suppose that $a_1,b_1\in Q$. Denoting by
$$c=a_1*^{\varepsilon_2}\dots *^{\varepsilon_n} a_n*^{-\eta_k}b_k*^{-\eta_{k-1}}\dots *^{-\eta_2} b_2$$
we see that $\theta(c)=\theta(b_1)$.  Since
$$(a_2^{\varepsilon_2}\dots a_n^{\varepsilon_n}b_k^{-\eta_k}\dots b_2^{-\eta_2})^{-1}a_1(a_2^{\varepsilon_2}\dots a_n^{\varepsilon_n}b_k^{-\eta_k}\dots b_2^{-\eta_2})=\theta(c)=\theta(b_1)=b_1,$$
 the element $a_2^{\varepsilon_2}\dots a_n^{\varepsilon_n}b_k^{-\eta_k}\dots b_2^{-\eta_2}$ belongs to $G_Q$ (two elements from $G_Q$ are conjugated in $G_P*G_Q$ if and only if they are conjugated in $G_Q$), therefore it can be written in generators $X$
 $$a_2^{\varepsilon_2}\dots a_n^{\varepsilon_n}b_k^{-\eta_k}\dots b_2^{-\eta_2}=x_1^{\xi_1}\dots x_t^{\xi_t}$$
for $x_1,\dots, x_t\in X$, $\xi_1,\dots,\xi_t\in\{\pm1\}$. Hence we have the equality
$$c=a_1*^{\varepsilon_2}\dots *^{\varepsilon_n} a_n*^{-\eta_k}b_k*^{-\eta_{k-1}}\dots *^{-\eta_2} b_2=a_1*^{\xi_1}x_1*^{\xi_2}\dots*^{\xi_t}x_t$$
which implies that $c\in Q$. Since $\theta(c)=\theta(b_1)$ and $\theta$ is injective on $Q$, we conclude that $c=b_1$ and $a=b$.
\end{proof}

Theorem \ref{quasiuniv} and Lemma \ref{envfr} imply the following result.
\begin{theorem}\label{freere} Let $Q,P$ be quandles such that the natural maps $Q\to G_Q$, $P\to G_P$ are injective, and $A,B$ be the sets of representatives of orbits of $P$, $Q$ respectively.
Then $P*Q$ is isomorphic to $Q(G_Q*G_P,A\cup B)$.
\end{theorem}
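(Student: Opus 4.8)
The plan is to combine Lemma~\ref{envfr} with Theorem~\ref{quasiuniv}, so that the only genuine work is identifying the orbits of the free product $P*Q$.

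First I would invoke Lemma~\ref{envfr}: since the natural maps $P\to G_P$ and $Q\to G_Q$ are injective, we have $G_{P*Q}=G_P*G_Q=G_Q*G_P$ and the natural map $\theta\colon P*Q\to G_{P*Q}$ is injective. Theorem~\ref{quasiuniv} then applies directly to the quandle $P*Q$: for any set $C$ of representatives of the orbits of $P*Q$ one has $P*Q\cong Q(G_{P*Q},\theta(C))=Q(G_Q*G_P,\theta(C))$. Since $\theta$ restricted to the factor $P$ (resp. $Q$) is the natural map $P\to G_P$ (resp. $Q\to G_Q$) followed by the canonical embedding $G_P\hookrightarrow G_Q*G_P$ (resp. $G_Q\hookrightarrow G_Q*G_P$), identifying $A\subseteq P$ and $B\subseteq Q$ with their images in $G_Q*G_P$ gives $\theta(A\cup B)=A\cup B$. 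Thus the theorem reduces to the claim that $A\cup B$, viewed inside $P*Q$, is exactly a set of representatives of the orbits of $P*Q$.

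To prove this claim I would verify two things. \emph{Every orbit of $P*Q$ meets $A\cup B$.} In a quandle presented by a set of generators, every element lies in the orbit of one of the generators, because $a*^{\varepsilon}b=S_b^{\varepsilon}(a)$ lies in ${\rm Orb}(a)$ for $\varepsilon\in\{\pm1\}$ and every quandle word is built from generators by the operations $*,*^{-1}$; since the generators of $P*Q$ are those of $P$ together with those of $Q$, and $A$ (resp. $B$) already contains a representative of every orbit of $P$ (resp. $Q$), every element of $P*Q$ lies in the orbit of some element of $A\cup B$. \emph{Distinct elements of $A\cup B$ lie in distinct orbits of $P*Q$.} Compose $\theta$ with the quotient map $G_{P*Q}\to N$, where $N=G_{P*Q}/[G_{P*Q},G_{P*Q}]$; this is a quandle homomorphism $P*Q\to{\rm Conj}(N)$, and ${\rm Conj}(N)$ is a trivial quandle since $N$ is abelian, so the composite is constant on each orbit of $P*Q$. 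Now $N=\big(G_P/[G_P,G_P]\big)\times\big(G_Q/[G_Q,G_Q]\big)$, and for any quandle the abelianized enveloping group is free abelian with basis indexed by the orbits, the image of an element $x$ being the basis vector indexed by ${\rm Orb}(x)$ (compare the proof of Theorem~\ref{abenvel}). Hence the composite sends an element of $P$ lying in a given orbit of $P$ to the corresponding basis vector inside the $G_P/[G_P,G_P]$-summand, and an element of $Q$ lying in a given orbit of $Q$ to the corresponding basis vector inside the $G_Q/[G_Q,G_Q]$-summand. For distinct elements of $A\cup B$ these images are pairwise distinct: distinct orbit representatives of $P$, resp. of $Q$, yield distinct basis vectors, while the two summands of $N$ meet only in $0$ and all the basis vectors in question are nonzero. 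So distinct elements of $A\cup B$ lie in distinct orbits of $P*Q$; in particular the images of $A$ and $B$ in $P*Q$ are disjoint. Combining the two verifications, $A\cup B$ is a set of orbit representatives of $P*Q$, and the claimed isomorphism $P*Q\cong Q(G_Q*G_P,A\cup B)$ follows from Theorem~\ref{quasiuniv}.

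The hard part is the distinctness step, which is also the only place the hypotheses are really used: Lemma~\ref{envfr} supplies both the injectivity of $\theta$ on $P*Q$ and the splitting $G_{P*Q}=G_P*G_Q$, and together with the abelianization observation this is what shows the orbits of $P*Q$ are precisely those of $P$ together with those of $Q$. Everything else is formal, being a direct application of Theorem~\ref{quasiuniv}.
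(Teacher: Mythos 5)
Your proposal is correct and takes essentially the same route as the paper: invoke Lemma~\ref{envfr} to get $G_{P*Q}=G_P*G_Q$ and injectivity of $\theta$ on $P*Q$, then apply Theorem~\ref{quasiuniv}, the only remaining point being that $A\cup B$ is a set of orbit representatives of $P*Q$. The paper simply asserts this last fact, while you supply the (sound) justification -- every element lies in the orbit of a generator, and the abelianization $G_{P*Q}^{\mathrm{ab}}\cong G_P^{\mathrm{ab}}\times G_Q^{\mathrm{ab}}$, free abelian on the orbits, separates distinct elements of $A\cup B$ -- so you are just filling in the step the paper leaves to the reader.
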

\begin{proof} Noting that the set $A\cup B$ is the maximal subset of $P*Q$ such that every two elements from it belong to different orbits, the result follows directly from Theorem~\ref{quasiuniv} and Lemma~\ref{envfr}.
\end{proof}
\begin{cor}\label{freetrivfree}The free quandle $FQ_n$ is the free product of $n$ trivial quandles $T_1$ with one element
$FQ_n = \underbrace{T_1 \ast T_1 \ast \cdots \ast T_1}_{n}$.
\end{cor}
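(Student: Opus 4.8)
The plan is to deduce this directly from Theorem~\ref{freere} by taking every factor to be the one-element trivial quandle $T_1$. First I would record the facts about $T_1 = \langle\{x\}\rangle$ needed to apply that theorem. Its enveloping group is the infinite cyclic group $G_{T_1} = \mathbb{Z} = \langle x\rangle$, since the only defining relation $x * y = y^{-1}xy$ with $x = y$ reads $x = x$ and is vacuous. Consequently the natural map $T_1 \to G_{T_1}$ is injective, and since $T_1$ is a single orbit, $\{x\}$ is a set of representatives of its orbits.

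Next I would iterate Theorem~\ref{freere} by induction on the number of factors, using that the free product of quandles is associative (immediate from the presentation $\langle X \cup Y \mid R \cup S\rangle$). The inductive hypothesis for $m$ factors, whose generators I name $x_1, \dots, x_m$, is that $P := T_1 \ast \cdots \ast T_1$ has injective natural map into $G_P = \mathbb{Z} \ast \cdots \ast \mathbb{Z} = F_m$ (the free group on $x_1, \dots, x_m$), and that $\{x_1, \dots, x_m\}$ is a set of orbit representatives of $P$; the case $m = 1$ is the base case above. Applying Theorem~\ref{freere} to $P$ and one more copy of $T_1$ with generator $x_{m+1}$ gives $P \ast T_1 \cong Q(F_m \ast \mathbb{Z}, \{x_1, \dots, x_{m+1}\}) = Q(F_{m+1}, \{x_1, \dots, x_{m+1}\})$; by Lemma~\ref{envfr} the natural map of $P \ast T_1$ into $G_{P \ast T_1} = F_{m+1}$ is again injective, and (as observed in the proof of Theorem~\ref{freere}) $\{x_1, \dots, x_{m+1}\}$ is a set of orbit representatives of $P \ast T_1$, which closes the induction. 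Thus
\[
\underbrace{T_1 \ast \cdots \ast T_1}_{n} \;\cong\; Q\bigl(F_n, \{x_1, \dots, x_n\}\bigr),
\]
where $F_n = F(X)$ is the free group on $X = \{x_1, \dots, x_n\}$.

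Finally I would invoke the identification $Q(F(X), X) = FQ(X)$ recorded in Section~\ref{GAprop} when $(G,A)$-quandles were introduced; since $FQ_n$ is by definition $FQ(X)$ for this $X$, this yields $T_1 \ast \cdots \ast T_1 \cong FQ_n$.

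I do not expect a genuine obstacle; the only thing needing care is the bookkeeping in the induction — that injectivity of $Q \to G_Q$ is inherited by free products (Lemma~\ref{envfr}) and that the orbit-representative set of the iterated free product is the full union of the singletons $\{x_i\}$ rather than something smaller (the proof of Theorem~\ref{freere}). As an alternative that sidesteps Theorem~\ref{freere}: a map from a one-element set extends uniquely to a quandle homomorphism out of $T_1$, so $T_1$ is the free quandle on one generator, i.e. $T_1 = \langle x \mid \varnothing\rangle$, whence $T_1 \ast \cdots \ast T_1 = \langle x_1, \dots, x_n \mid \varnothing\rangle = FQ_n$ directly from the definitions of the free product and of $FQ_n$.
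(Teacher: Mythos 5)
Your main argument is correct and is essentially the paper's own proof: the paper likewise applies Theorem~\ref{freere} to $n$ copies of $T_1$, using $G_{T_1}=\mathbb{Z}$ and the singleton orbit representatives, to obtain $Q(F(X),X)$ and then identifies this with $FQ_n$ via the remark in Section~\ref{GAprop}. Your explicit induction (with Lemma~\ref{envfr} handling the inherited injectivity) and the alternative presentation-based argument are fine additions, but they follow the same route as the paper.
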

\begin{proof} For $i=1,\dots,n$ denote by $Q_i=\{a_i\}$ the trivial quandle with one element $a_i$. From Theorem~\ref{freere} follows that
\begin{align}
\notag Q_1*Q_2*\dots*Q_n&=Q(G_{Q_1}*G_{Q_2}*\dots*G_{Q_n},\{a_1,\dots,a_n\})\\
\notag&=Q(\langle a_1\rangle*\langle a_2\rangle*\dots*\langle a_n\rangle,\{a_1,\dots,a_n\})\\
\notag&=Q(F(A),A),
\end{align}
where $A=\{a_1,\dots,a_n\}$ and $F(A)$ is the free group on $A$. Since all $Q_i$ are isomorphic to $T_1$ and $Q(F(A),A)=FQ_n$, the corollary is proved.
\end{proof}
Corollary~\ref{freetrivfree} implies the following result.
\begin{cor}The free quandle $FQ_n$ on the set $X = \{ x_1, x_2, \dots, x_n \}$ has the presentation $FQ_n=\langle X~|~\varnothing\rangle$.
\end{cor}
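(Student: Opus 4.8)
The plan is to deduce the statement directly from Corollary~\ref{freetrivfree} together with the definition of the free product of quandles by generators and relations. The only substantive point to establish first is that the trivial quandle $T_1$ with a single element $x$ has the presentation $T_1=\langle\{x\}\mid\varnothing\rangle$; equivalently, that the free quandle on a one-element set is trivial with one element. I would prove this by induction on the length of a quandle word $w\in WQ(\{x\})$: a word of positive length has the form $(a)\ast^{\pm1}(b)$ with $a,b$ strictly shorter, so by the inductive hypothesis $a\sim x$ and $b\sim x$ in $\langle\{x\}\mid\varnothing\rangle$, whence $w\sim x\ast^{\pm1}x\sim x$ by clauses (1)--(2) of the equivalence relation $\sim$ (these encode axioms (q1) and (r1)). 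Thus every quandle word on $x$ is equivalent to $x$, so $\langle\{x\}\mid\varnothing\rangle$ has exactly one element and equals $T_1$. (Alternatively one may note $\langle\{x\}\mid\varnothing\rangle=FQ_1=Q(F(\{x\}),\{x\})$ and observe that $F(\{x\})$ is abelian, so this $(G,A)$-quandle collapses to a single point.)

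Next I would invoke Corollary~\ref{freetrivfree}, which identifies $FQ_n$ with the $n$-fold free product $T_1\ast T_1\ast\cdots\ast T_1$, taking the $i$-th factor to be the trivial quandle on the one-element set $\{x_i\}$, so that the $n$ factors have pairwise disjoint generating sets whose union is $X$. Using the presentation $\langle\{x_i\}\mid\varnothing\rangle$ of each factor from the previous step, together with the definition $Q\ast P=\langle X\cup Y\mid R\cup S\rangle$ of the free product (applied iteratively across the $n$ factors, which is legitimate since the free product is independent of the chosen presentations), the resulting free product is presented by the generating set $\{x_1\}\cup\cdots\cup\{x_n\}=X$ and the relation set $\varnothing\cup\cdots\cup\varnothing=\varnothing$. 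Hence $FQ_n=\langle X\mid\varnothing\rangle$, as claimed.

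The only real obstacle is the first step, namely verifying carefully that the one-generator, no-relation quandle is genuinely the one-element trivial quandle; but this is a short induction on quandle words using the axioms as recorded in $\sim$. Everything after that is a direct unwinding of the definition of the free product, so I expect the written proof to be brief.
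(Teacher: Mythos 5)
Your proposal is correct and follows essentially the same route as the paper, which simply derives the presentation from Corollary~\ref{freetrivfree} ($FQ_n=T_1\ast\cdots\ast T_1$) together with the definition $Q\ast P=\langle X\cup Y~|~R\cup S\rangle$; you merely make explicit the (implicit) fact that $T_1=\langle\{x_i\}~|~\varnothing\rangle$, and your induction for that step is fine since $\sim$ is by construction compatible with the operation. No gaps.
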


{\small

\medskip

\noindent
Valeriy Bardakov\\
Sobolev Institute of Mathematics, Acad. Koptyug avenue 4, 630090 Novosibirsk, Russia,\\
Novosibirsk State University, Pirogova 1, 630090 Novosibirsk, Russia,\\
Novosibirsk State Agricultural University, Dobrolyubova 160, 630039 Novosibirsk, Russia\\
bardakov@math.nsc.ru
~\\
~\\
Timur Nasybullov\\
KU Leuven KULAK, Etienne Sabbelaan 53, 8500 Kortrijk, Belgium\\
timur.nasybullov@mail.ru
}

\end{document}